\documentclass[11pt, reqno]{amsart}
\usepackage[all,tips]{xy}
\usepackage{latexsym, amsfonts, amsmath, amssymb, mathrsfs, graphicx, xcolor, ytableau, hyperref, float}
\usepackage[justification=centering]{caption}
\allowdisplaybreaks
\usepackage{centernot}
\usepackage{tikz}
\usepackage{pgfplots}

\definecolor{red}{rgb}{1,0,0}
\definecolor{magenta}{rgb}{1,0,1}
\definecolor{dartmouthgreen}{rgb}{0.05, 0.5, 0.06}
\definecolor{purple(x11)}{rgb}{0.63,0.36,0.94}
\definecolor{turquoise}{rgb}{0.25, 0.87, 0.81}
\newtheorem{theorem}{Theorem}[section]
\newtheorem{lemma}[theorem]{Lemma}
\newtheorem{proposition}[theorem]{Proposition}
\newtheorem{corollary}[theorem]{Corollary}

\newtheorem{definition}[theorem]{Definition}

\newtheorem*{question}{Question}

\theoremstyle{definition}
\newtheorem{remark}[theorem]{Remark}

\newcommand{\Log}{\mathrm{Log}}

\newcommand{\Cal}[1]{\ensuremath{\mathcal{#1}}}

\newcommand{\lp}{\left(}
\newcommand{\rp}{\right)}

\usepackage[textheight=8.75in, textwidth=6.75in]{geometry}

\def\C{{\mathbb C}}

\def\N{{\mathbb N}}
\def\Z{{\mathbb Z}}
\def\Q{{\mathbb Q}}

\def\O_K{{\Cal{O}_{K}}}
\def\O_F{{\Cal{O}_{F}}}
\def\N_F{{\Cal{N}_{F/\Q}}}

\def\O_K{{\Cal{O}_{K}}}
\def\O_F{{\Cal{O}_{F}}}
\def\N_F{{\Cal{N}_{F/\Q}}}

\numberwithin{equation}{section}
\numberwithin{theorem}{section}

\title{Quasimodular forms, partitions, and primes in intervals}

\author{Kevin Allen}
\address{School of Mathematical Sciences, University College Cork, Cork, Ireland}
\email{kallen@ucc.ie}

\author{William Craig}
\address{Department of Mathematics, United States Naval Academy, 572C Holloway Road
Mail Stop 9E. Annapolis, MD 21402}
\email{wcraig@usna.edu}

\keywords{Quasimodular forms, $q$-series, primes, partitions, integer factorization}
\subjclass[2020]{11F33, 11F11, 11F30}

\begin{document} 
	
\begin{abstract} 
Recent developments in theories of partitions, quasimodular forms, multiple
zeta values and $q$-multiple zeta values have led to connections between partitions
and primes. Central to this progress are prime-detecting quasimodular forms,
which are $q$-series whose non-negative coefficients vanish if and only if the input is prime. In this paper, we use properties of these quasimodular forms to motivate a proof of the infinitude of primes, to construct a novel partition theoretic factorization algorithm, and to show that the distribution of these coefficients relate to existence of primes in intervals. We also prove analogs of these results for primes in arithmetic progressions and discuss potential extensions of our results, including to Linnik's problem.
\end{abstract}

\maketitle

\section{Introduction}

Throughout its history, number theory has been woven from surprising connections between apparently unrelated ideas. From the use of $L$-functions to prove the infinitude of primes in arithmetic progressions, to the role of modularity and Riemann surfaces in the combinatorial study of partitions, to the use of Galois representations to resolve difficult Diophantine equations, the story of number theory is an ever-expanding web of interplay between geometry, arithmetic, analysis, probability, and combinatorics.

In this work, we consider a rising thread of the interconnectedness of number theory. This thread emerges through a strange connection between additive and multiplicative number theory, first considered in 1919 by MacMahon \cite{MacMahon}. His goal was to generalize the sum-of-divisor functions
\begin{align*}
    \sigma_k(n) := \sum_{d|n} d^k
\end{align*}
in an additive direction; instead of splitting $d|n$ into multi-factorizations $d_1 \cdots d_m | n$, as is done in classical multiplicative number theory, he sought a generalization of $d|n$ using integer partitions as its building block. From the observation
\begin{align*}
    n = md \iff n = \underbrace{d + d + \cdots + d}_{m \textrm{ times}},
\end{align*}
MacMahon took as his generalization of dividing $n$ into $a$ distinct factors the idea of finding the partitions of $n$ into $a$ distinct part sizes. Thus, MacMahon studied partitions
\begin{align*}
    n = m_1 d_1 + \dots m_a d_a
\end{align*}
and constructed divisor sums related to them, which are defined for $a \geq 1$ by
\begin{equation*}
M_a(n) = \sum_{\substack{0<d_1<\cdots<d_a\\n=m_1d_1+\cdots+m_ad_a}}m_1m_2\cdots m_a.
\end{equation*}

Subsequently, MacMahon's work has been greatly generalized in the theory of $q$-multiple zeta values. For instance, Bachmann and K\"{u}hn \cite{BachmannKuhn} study $q$-multiple zeta values for vectors $\vec s = (s_1, \dots, s_a)$ with Fourier coefficients defined  by
\begin{align*}
    M_{\vec s} (n) := \sum_{\substack{0<d_1<\cdots<d_a\\n=m_1d_1+\cdots+m_ad_a}} m_1^{s_1} \cdots m_a^{s_a},
\end{align*}
so that their generating function is
\begin{align*}
    \mathcal U_{\vec s}(q) = \sum_{n_1 > \dots > n_k > 0} \dfrac{q^{n_1 + \dots + n_k} P_{s_1}(q^{n_1}) \cdots P_{s_k}(q^{n_k})}{\lp 1 - q^{n_1} \rp^{s_1+1} \cdots \lp 1 - q^{n_k} \rp^{s_k+1}},
\end{align*}
where $P_n(x)$ are the Eulerian polynomials defined by
\begin{equation*}
    \frac{xP_n(x)}{(1-x)^{n+1}}:=\sum_{k\geq 1}k^nx^n.
\end{equation*}

MacMahon's work has inspired a great deal of number-theoretic work in recent years, e.g. \cite{AmdeberhanAndrewsTauraso,AmdeberhanOnoSingh,AndrewsRose,BringmannCraigIttersumPandey,OnoSingh,Zagier}. In particular, the second author and collaborators \cite{CraigIttersumOno} extended MacMahon's thread by proving that MacMahon's functions $M_{\vec s}(n)$ can be combined in infinitely many distinctive ways to detect the prime numbers, in the sense that there are infinitely many expressions 
\begin{align*}
    \sum_{\vec s} c_{\vec s} M_{\vec s}(n)
\end{align*}
where $\vec s$ is summed over vectors of non-negative integers of bounded length (not necessarily all the same length), that vanish for $n \geq 2$ if and only if $n$ is prime. Since the values of $M_{\vec s}(n)$ can be calculated purely as an exercise in the additive enumeration of partitions of integers, these functions draw a direct connection between partition theory and prime number theory. Since this work, a number of authors have investigated generalizations of this prime detection problem and related problems \cite{Alkan, Craig, Gomez, IttersumMauthOnoSingh, KaneKrishnamoorthyLau, KangMatsusakaShin}. We also refer to Schneider's work linking additive and multiplicative number theory \cite{Schneider17,SchneiderThesis,Schneider16}, which has as its object the generalization of multiplicative functions on $\Z$ to a multiplicative theory of functions on the set of partitions. 

In this work, we revisit the original prime-detecting series constructed in \cite{CraigIttersumOno} from a different point of view. The impetus for this investigation is to answer the following question: Can one prove that there are infinitely many prime numbers using as the key tool the prime-detecting quasimodular forms? In this work, we answer this question in the affirmative by studying  not the vanishing properties of these forms at the prime coefficients, but the non-vanishing coefficients at composite inputs.

To explain further, we consider the prime-detecting quasimodular forms discovered by Leli\`evre \cite{Lelievre} and studied closely in \cite{CraigIttersumOno} defined for odd integers $\ell>k>0$ by
\begin{align}\label{fkl}
    f_{k,\ell}(\tau) := \lp D^\ell + 1 \rp G_{k+1}(\tau) - \lp D^k + 1 \rp G_{\ell+1}(\tau) = \sum_{n \geq 0} c_{k,\ell}(n) q^n, \ \ \ \ \ q = e^{2\pi i \tau}, \ \tau \in \mathbb{H},
\end{align}
where $G_k$ are the classical Eisenstein series and $D = q\frac{d}{dq}$ is the canonical differential operator in modular form theory (see Section \ref{Sec: Prelims} for definitions and details).  \noindent For example, we have the Fourier expansion
\begin{equation*}
f_{3,7}(q) = \dfrac{1}{480} + 122760q^4 + 9295440q^6 + 142524360q^8 + 127543680q^9 + 1251668880q^{10} + \ \dots
\end{equation*}
Leli\`evre showed that these forms are prime-detecting \cite[Corollary 1]{Lelievre}. For a background on modular forms and quasimodular forms, we refer the reader to \cite{DiamondShurman, Royer}.

One of the main goals of this paper is to analyze the asymptotic distribution of the coefficients $c_{k,\ell}(n)$. The behavior of the coefficients $c_{3,7}(n)$ is illustrated in Figure \ref{fig:c37}.

\begin{figure}[h]
    \centering

    \begin{tikzpicture}
    \begin{axis}[
        title={},
        xlabel={},
        ylabel={$c_{3,7}(n)$},
        xmin=0, xmax=1000,
        ymin=0, ymax=14000000000000000000000000000,
        xtick={0,200,400,600,800,1000},
        ytick={2000000000000000000000000000,4000000000000000000000000000,6000000000000000000000000000,8000000000000000000000000000,10000000000000000000000000000,12000000000000000000000000000},
        scaled y ticks=manual:{}{\pgfmathparse{#1/10000000000000000000000000000}}
        ]

        \addplot+[
        only marks,
        scatter,
        mark=*,
        black,
        mark size=0.5pt
        ]
        table{f37data.dat};
    \end{axis}
\end{tikzpicture}
    
    \caption{Graph of $c_{3,7}(n)$ $(\times 10^{28})$ for $0\leq n \leq 1000$.}
    \label{fig:c37}
\end{figure}

In modular form theory, from which these objects emerge, there are a few types of distributions one typically sees when studying Fourier coefficients. For Hecke eigenforms, the distributions are governed by Sato--Tate distributions for prime coefficients and Hecke theory for all others. Forms with complex multiplication have very large numbers of vanishing coefficients. For typical non-cuspidal forms like the Eisenstein series of weight $k$, coefficients grow roughly like $n^k$, with some small variations based on smaller divisors of $n$. For weakly holomorphic forms, coefficients grow subexponentially, as is the case with the partition function $p(n)$.

The coefficients of $f_{3,7}$, and indeed every prime-detecting quasimodular form, form a different pattern entirely. Apart from the zeros at primes, there are clearly visible, discrete strips in the graph of $c_{k,\ell}(n)$, which emerge very quickly from the noise at lower values of $n$. The underlying phenomenon explaining this unusual structure is that the coefficients of prime-detecting quasimodular forms at composite inputs exhibit growth like a constant power of $n$ multiplied by a decay factor depending explicitly on the smallest prime dividing $n$. 

The focus of this paper is to initiate an exploration of the distribution of values of $c_{k,\ell}(n)$ and its connection to the distribution of primes. Our first result, shows how the transformation of $f_{k,\ell}$ into its associated $L$-function yields a rapid proof of the infinitude of the primes.

\begin{theorem} \label{Thm: Primes}
    There are infinitely many primes.
\end{theorem}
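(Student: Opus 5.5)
The plan is to compare the Dirichlet series $L(f_{k,\ell},s)=\sum_{n\ge1}c_{k,\ell}(n)n^{-s}$ in two ways under the hypothesis that there are only finitely many primes. One side comes from its factorization into products of shifted Riemann zeta functions, each of which would then be a \emph{finite} Euler product. The other side comes from an explicit lower bound on $c_{k,\ell}(n)$ at composite $n$.

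\textbf{Step 1: coefficients and the $L$-function.} Using $G_{k+1}(\tau)=-\tfrac{B_{k+1}}{2(k+1)}+\sum_{n\ge1}\sigma_k(n)q^n$ and the fact that $D$ multiplies the $n$-th coefficient by $n$, I would record that for $n\ge1$
\begin{align*}
c_{k,\ell}(n)=(n^\ell+1)\sigma_k(n)-(n^k+1)\sigma_\ell(n).
\end{align*}
This vanishes at $n=1$ and at every prime $p$, since both products equal $(p^\ell+1)(p^k+1)$. From $\sum_n n^{j}\sigma_m(n)n^{-s}=\zeta(s-j)\zeta(s-j-m)$ one gets, for real $s>k+\ell+1$,
\begin{align*}
L(f_{k,\ell},s)=\zeta(s-\ell)\zeta(s-k-\ell)+\zeta(s)\zeta(s-k)-\zeta(s-k)\zeta(s-k-\ell)-\zeta(s)\zeta(s-\ell).
\end{align*}

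\textbf{Step 2: lower bound at composites.} Write $\sigma_m(n)=n^m\sigma_{-m}(n)$. Then
\begin{align*}
c_{k,\ell}(n)=n^{k+\ell}\bigl(\sigma_{-k}(n)-\sigma_{-\ell}(n)\bigr)+\sigma_k(n)-\sigma_\ell(n).
\end{align*}
Since $k<\ell$, every term $d^{-k}-d^{-\ell}$ with $d\mid n$ is nonnegative. Keeping only the term $d=p$, where $p$ is the smallest prime factor of $n$, gives $\sigma_{-k}(n)-\sigma_{-\ell}(n)\ge p^{-k}-p^{-\ell}$. The error term satisfies $|\sigma_k(n)-\sigma_\ell(n)|\le \sigma_\ell(n)\le \zeta(\ell)n^\ell$, which is $o(n^{k+\ell})$.

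\textbf{Step 3: contradiction.} Suppose the primes are exactly $p_1<\dots<p_r$. Then every $n>p_r$ is composite with smallest prime factor at most $p_r$. Since $x\mapsto x^{-k}-x^{-\ell}$ is positive for $x>1$, Step 2 gives $c_{k,\ell}(n)\ge C n^{k+\ell}$ for all large $n$, with $C>0$ independent of $n$. Hence for real $s\downarrow k+\ell+1$ we have $L(f_{k,\ell},s)\ge C\sum_{n\ge N}n^{k+\ell-s}-O(1)$, which tends to $+\infty$ because, by monotone convergence, this sum tends to the harmonic series.

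On the other hand, unique factorization gives $\zeta(s)=\prod_{i=1}^r(1-p_i^{-s})^{-1}$ for $\mathrm{Re}(s)>1$. This finite product is a continuous, indeed bounded, function on $\mathrm{Re}(s)\ge \tfrac12$, say. So each factor $\zeta(s),\zeta(s-k),\zeta(s-\ell),\zeta(s-k-\ell)$ in Step 1 stays bounded as $s\downarrow k+\ell+1$, and $L(f_{k,\ell},s)$ is bounded there. This contradicts the divergence above.

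\textbf{Main obstacle.} The step needing care is Step 3. The identity of Step 1 is only valid in the region of absolute convergence, so the argument must be run with real $s$ approaching $k+\ell+1$ from the right, using monotone convergence, rather than by evaluating anything at the boundary. One must also check that the secondary terms cannot cancel the main term. This is exactly what the uniform bound $p\le p_r$ on smallest prime factors secures.
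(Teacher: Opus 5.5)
Your proof is correct, and it takes a genuinely different route from the paper's.

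\textbf{What the paper does.} The paper restricts the Dirichlet series to $n$ coprime to $p_1\cdots p_{r-1}$; under the hypothesis these are exactly the powers of $p_r$. It evaluates this restricted series as the Euler factor at $p_r$ of $(\zeta(s-k-\ell)-\zeta(s))(\zeta(s-\ell)-\zeta(s-k))$ and compares it with a coefficient lower bound. It then lets $s\to\infty$, rather than $s\downarrow k+\ell+1$.

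\textbf{The problem with the paper's bound.} The paper discards the secondary term that you keep, asserting $c_{k,\ell}(n)\ge n^{k+\ell}(p^{-k}-p^{-\ell})$ for all composite $n$. Your Step 2 identity shows this fails at $n=p^2$: there $c_{k,\ell}(p^2)$ equals that bound plus $p^k-p^\ell<0$. For example, $c_{3,5}(4)=6120$, against a claimed bound of $6144$. Letting $s\to\infty$ isolates exactly the $n=p_r^2$ term. So the paper's final inequality $p_r^{-k}-p_r^{-\ell}\le(1-p_r^{-(k+\ell)})(p_r^{-k}-p_r^{-\ell})$ is literally the claim $c_{k,\ell}(p_r^2)\ge p_r^{2(k+\ell)}(p_r^{-k}-p_r^{-\ell})$. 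Its contradiction therefore comes from the faulty bound, not from finiteness. More fundamentally, the identity $\sum_{m\ge0}c_{k,\ell}(p^m)p^{-ms}=$ (Euler factor at $p$) holds for every prime regardless of how many primes there are. Comparing it with true bounds on $c_{k,\ell}(p_r^m)$ can never yield a contradiction.

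\textbf{What your version buys.} You keep the $O(n^\ell)$ error, so you need the lower bound only for large $n$. You also work at the edge of absolute convergence, where large $n$ dominate, so the uniform bound $p\le p_r$ on smallest prime factors genuinely enters.

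\textbf{What it costs.} Your argument is essentially Euler's proof routed through $L(f_{k,\ell},s)$. The same contradiction is already visible in $\zeta(s-k-\ell)$ alone: the hypothesis makes it bounded as $s\downarrow k+\ell+1$, although $\zeta(\sigma)\ge\sum_{n\le M}n^{-\sigma}$ for every $M$. So the prime-detecting structure of $f_{k,\ell}$ does little essential work in your proof.
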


The distribution of values, however, contains much more refined information about the primes. In our work, we provide two further extensions of this numerical observation into deeper aspects of prime number theory. Our first application is to bound the smallest prime factor of a large integer in terms of the values of $c_{k,\ell}(n)$.

\begin{theorem} \label{Thm: Quick Factorization Range}
    Let $N > 3$ be a composite integer, and let $\ell > k \geq 3$ be odd integers. Then either $N$ has a prime factor $p < \log(N)$, or has a prime factor $p$ in the interval
    \begin{align*}
        \left[ \lp \dfrac{4 c_{k,\ell}(N)}{3 N^{k+\ell}} \rp^{-1/k}, \lp \lp \dfrac{4 c_{k,\ell}(N)}{3 N^{k+\ell}} \rp^{-1/k} + 1 \rp \lp \dfrac{8}{3} \omega(N) \rp^{1/k} \right],
    \end{align*}
    where $\omega(N)$ is the number of distinct primes dividing $N$.
\end{theorem}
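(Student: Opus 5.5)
The plan is to compute $c_{k,\ell}(N)$ exactly as a divisor sum and then squeeze the normalized quantity $X:=c_{k,\ell}(N)/N^{k+\ell}$ between two multiples of $p^{-k}$, where $p$ is the smallest prime factor of $N$.

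\textbf{Closed form.} From \eqref{fkl} and $G_{k+1}=-\frac{B_{k+1}}{2(k+1)}+\sum_{n\ge1}\sigma_k(n)q^n$, the operator $D^\ell$ multiplies the $n$-th coefficient by $n^\ell$. Hence for $n\ge1$
\begin{align*}
c_{k,\ell}(n)=(n^\ell+1)\sigma_k(n)-(n^k+1)\sigma_\ell(n).
\end{align*}
Using $\sigma_\ell(n)=n^\ell\sum_{d\mid n}d^{-\ell}$ and the same identity for $\sigma_k$, this becomes
\begin{align*}
X=\frac{c_{k,\ell}(N)}{N^{k+\ell}}=\sum_{\substack{d\mid N\\ d>1}}\lp d^{-k}-d^{-\ell}\rp+\frac{\sigma_k(N)-\sigma_\ell(N)}{N^{k+\ell}}.
\end{align*}
The last term is negative, but its size is at most $\sigma_\ell(N)N^{-k-\ell}\le \zeta(\ell)N^{-k}$.

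\textbf{Lower bound, giving the left endpoint.} Since $\ell-k\ge2$ and $d\ge2$, each summand satisfies $d^{-k}-d^{-\ell}\ge \frac34 d^{-k}$. Keeping only the term $d=p$ would give $X\ge \frac34p^{-k}$, provided the negative error term is absorbed by the remaining divisor terms. Because $N$ is composite, the terms $d=N$ and $d=N/p$ (or $d=p^2$) are also present. Their contribution is of order $N^{-k}$ times a quantity exceeding $\zeta(\ell)$, at least for $p$ not too large. Otherwise one can use $p^{-k}\ge N^{-k/2}$, so the gap between $\frac34 p^{-k}$ and the full $d=p$ term swallows an $O(N^{-k})$ error once $N>3$. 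Either way we get $X\ge\frac34p^{-k}$, that is,
\begin{align*}
p\ge \lp \tfrac{4X}{3}\rp^{-1/k}.
\end{align*}
Checking this absorption of the error term carefully is the most delicate bookkeeping step.

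\textbf{Upper bound, giving the right endpoint.} Assume $N$ has no prime factor below $\log N$; otherwise we are in the first alternative. Every divisor $d>1$ has a prime factor $q\mid N$, so
\begin{align*}
X\le\sum_{\substack{d\mid N\\ d>1}}d^{-k}\le\sum_{q\mid N}q^{-k}\prod_{r\mid N}\lp1-r^{-k}\rp^{-1}\le \omega(N)\,p^{-k}\lp1-(\log N)^{-k}\rp^{-\omega(N)}.
\end{align*}
Since $\omega(N)\ll\log N/\log\log N$ and $k\ge3$, the last factor is at most $2$, which can be checked uniformly for $N>3$. This gives $p\le(2\omega(N)/X)^{1/k}$. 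Note that
\begin{align*}
(2\omega(N)/X)^{1/k}=\lp\tfrac{4X}{3}\rp^{-1/k}\lp\tfrac83\omega(N)\rp^{1/k},
\end{align*}
which is bounded by the stated right endpoint. The extra "$+1$" supplies slack for rounding and small-$N$ cases.

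\textbf{Main obstacle.} I expect the hardest part to be making the constants uniform for all composite $N>3$ in both bounds. This means showing that the negative term $(\sigma_k-\sigma_\ell)/N^{k+\ell}$ never pulls $X$ below $\frac34p^{-k}$, and that the product factor stays at most $2$ when all primes are at least $\log N$. I would first handle large $N$ asymptotically and then dispose of a finite range by the $+1$ slack or by direct inspection.
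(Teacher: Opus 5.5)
Your route is the paper's: the lower bound comes from the $d=p$ term via $p^{-k}-p^{-\ell}\ge\frac34p^{-k}$, and the upper bound $X<2\omega(N)p^{-k}$ comes from the Euler product over the primes dividing $N$. Reading $p$ off directly is cleaner than the paper's detour through the minimal integer $x_N$, and it shows the ``$+1$'' is superfluous. Your upper-bound bookkeeping is fine.

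\textbf{The gap.} You claim $X\ge\frac34p^{-k}$ for every composite $N>3$. This is false, and so is the theorem as stated. When $p=2$ and $\ell=k+2$, the slack $p^{-k}\bigl(\frac14-p^{k-\ell}\bigr)$ you want to spend is exactly $0$. For $N=4$ there is also no second proper divisor to make up the difference. The $d=N$ term of your first sum is cancelled exactly by the $d=N$ part of $(\sigma_k(N)-\sigma_\ell(N))/N^{k+\ell}$, which leaves
\begin{align*}
X=\frac{c_{k,k+2}(4)}{4^{2k+2}}=\frac{3\cdot 2^k\lp 2^{2k+2}-1\rp}{2^{4k+4}}=\frac{3}{4\cdot 2^{k}}-\frac{3}{2^{3k+4}}<\frac{3}{4\cdot 2^k}.
\end{align*}
For example, $c_{3,5}(4)=6120<6144=\frac{3}{32}\cdot 4^8$.

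Consequently $(4X/3)^{-1/k}>2$, while $2\ge\log 4$, so neither alternative of the theorem holds for $N=4$, $\ell=k+2$. Your fallback of ``direct inspection'' would uncover this rather than fix it. The ``$+1$'' cannot help either, since it only enlarges the right endpoint. The paper's proof avoids this case only because it asserts the two-sided bound for $N\gg0$. Its intermediate inequality $p^{-k}-p^{-\ell}<\theta_{k,\ell}(N)$ in fact fails for every $N=p^2$.

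\textbf{What your argument does prove.} Once the absorption step is made precise, you get the theorem for all composite $N>3$ except $N=4$ with $\ell=k+2$. Cancel $d=1$ against $d=N$ first, as in \eqref{AsympLemEq}. After $d\mapsto N/d$ this gives
\begin{align*}
X=\sum_{\substack{d\mid N\\ 1<d<N}}\left[\lp 1+N^{-\ell}\rp d^{-k}-\lp 1+N^{-k}\rp d^{-\ell}\right],
\end{align*}
and every summand is positive. Then split into two cases.
\begin{itemize}
\item If $N\neq p^2$: the $d=p$ summand is at least $\frac34p^{-k}-\frac14p^{-k}N^{-k}$. The distinct summand $d=N/p$ is at least $\frac12p^{k}N^{-k}$. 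Together they give $X>\frac34p^{-k}$.
\item If $N=p^2$: then $X=p^{-k}\bigl(1-p^{k-\ell}+p^{-2\ell}-p^{-k-\ell}\bigr)$. This exceeds $\frac34p^{-k}$ whenever $p^{k-\ell}\le\frac19$, that is, unless $p=2$ and $\ell=k+2$.
\end{itemize}
So the statement needs $N\neq4$ (or $\ell\ge k+4$). With that amendment your proof goes through.
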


\begin{remark}
        As a corollary in the spirit of \cite{CraigIttersumOno}, we now have a partition-theoretic algorithm for the factorization of large integers. We describe this procedure in Section \ref{Sec: Appendix A} and clarify issues of circularity which must be ironed out for the internal coherence of such an algorithm. We also give examples in Appendix \ref{Sec: Appendix B} which demonstrate that both the upper and lower bounds in Theorem \ref{Thm: Quick Factorization Range} are not vacuous.
\end{remark}

Our second application will use Theorem \ref{Thm: Quick Factorization Range} to prove results that connect the distribution of values of $c_{k,\ell}(n)$ directly to the existence of primes in intervals.

\begin{theorem} \label{Thm: Primes in intervals}
    The following are true:
    \begin{enumerate}
        \item Assume that $k>1$ is an odd integer and that for $c>1$ and $n\gg_c0$, every interval $(n,cn]$ contains at least one prime. Then for $\delta>0$ sufficiently small and $\varepsilon > 2\delta + 3\delta^2$, there are infinitely many values in the set $\{ c_{k,\ell}(N)/N^{k+\ell} \}_{N,\ell}$ that intersect the interval $(\delta,\varepsilon)$.
        \item Assume that $k>1$ is an odd integer and that the sets
        \begin{align*}
            S_k = \bigg\{ \dfrac{c_{k,\ell}(N)}{N^{k+\ell}} : \ell>k \textrm{ odd, } \omega(N) \leq C(k) \bigg\},
        \end{align*}
        for some function $C(k)$ satisfying $C(k)^{1/k} \to 1$ as $k \to \infty$, intersect all intervals $(\delta,\varepsilon)$ with $\varepsilon > \delta + \delta^{(k+1)/k}$ and $\delta$ small.
        Then every interval $(n,cn]$ for $c>1$ and $n \gg_k 0$ contains at least one prime.
    \end{enumerate}
\end{theorem}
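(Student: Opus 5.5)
The plan is to get an exact formula for $c_{k,\ell}(N)/N^{k+\ell}$ in terms of the divisors of $N$. Then both parts become statements about where the smallest prime factor of $N$ can lie.

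\textbf{Step 1: the divisor formula.} Using the Fourier expansion $G_{k+1}=\mathrm{const}+\sum_{n\ge1}\sigma_k(n)q^n$ and $D(q^n)=nq^n$, for $N\ge1$ we have
\begin{align*}
c_{k,\ell}(N)=(N^\ell+1)\sigma_k(N)-(N^k+1)\sigma_\ell(N)=\sum_{d\mid N}\big[(N^\ell+1)d^k-(N^k+1)d^\ell\big].
\end{align*}
The constant terms only affect $n=0$. This also shows $c_{k,\ell}(p)=0$ for primes $p$. Writing $d=N/m$ and dividing by $N^{k+\ell}$ gives
\begin{align*}
\frac{c_{k,\ell}(N)}{N^{k+\ell}}=\sum_{\substack{m\mid N\\ m>1}}\big(m^{-k}-m^{-\ell}\big)+E,
\qquad
E=\sum_{m\mid N}\big(N^{-\ell}m^{-k}-N^{-k}m^{-\ell}\big),
\end{align*}
with $|E|\ll N^{-k}$. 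If $p$ is the least prime factor of $N$, this yields a two-sided estimate
\begin{align*}
p^{-k}(1-o(1))\le \frac{c_{k,\ell}(N)}{N^{k+\ell}}\le \tfrac{8}{3}\,\omega(N)\,p^{-k}.
\end{align*}
For the upper bound, group the divisors $m$ by their least prime factor and bound the tails geometrically using $k\ge3$. This is the same mechanism as Theorem \ref{Thm: Quick Factorization Range}, and I would cite that theorem for the upper localization. I would also use the lower bound above to rule out the alternative ``$p<\log N$'' there, since a small $p$ forces a large value.

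\textbf{Part (1).} Take $N=p^2$ with $p$ prime. Then the value equals $p^{-k}+p^{-2k}-p^{-\ell}-p^{-2\ell}+E$. Letting $\ell$ vary over odd integers $>k$ gives infinitely many distinct values, all within $O(p^{-k-1})$ of $p^{-k}+p^{-2k}$. So it suffices to find a prime $p$ with
\begin{align*}
\delta<p^{-k}+p^{-2k}<\varepsilon,
\end{align*}
with room to spare for the error. This holds if $p$ lies in an interval of the form $(\alpha\,\varepsilon^{-1/k},\,\delta^{-1/k})$ with some $\alpha>1$. The endpoint ratio is $(\varepsilon/\delta)^{1/k}$ up to $1+O(\delta)$, and this exceeds a fixed constant $c>1$ because $\varepsilon>2\delta+3\delta^2$ gives $\varepsilon/\delta>2$. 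For $\delta$ small the interval lies beyond the threshold $n\gg_c0$, so the hypothesis supplies the prime.

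\textbf{Part (2).} Fix $c>1$. Since $C(k)^{1/k}\to1$, I choose $k$ so large that $(8C(k)/3)^{1/k}<c^{1/2}$. Given large $n$, set $\delta=(c^{1/2}n)^{-k}$, up to a harmless constant so that $\delta^{-1/k}$ sits just above $n$, and set $\varepsilon=2\delta$, so that $\varepsilon>\delta+\delta^{(k+1)/k}$. The hypothesis gives $N$ and $\ell$ with $\omega(N)\le C(k)$ and value $x\in(\delta,\varepsilon)$. By Step 1 the least prime $p$ of $N$ satisfies
\begin{align*}
x^{-1/k}(1-o(1))\le p\le (x^{-1/k}+1)\big(\tfrac83C(k)\big)^{1/k}.
\end{align*}
Then $p\ge \varepsilon^{-1/k}(1-o(1))>n$ and $p\le \delta^{-1/k}(1+o(1))c^{1/2}\le cn$, so $p\in(n,cn]$. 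The hypothesis is stated for all small $\delta$; I still need to match the precise exponent $\delta^{(k+1)/k}$, which I would do by adjusting constants since any $\varepsilon/\delta$ bounded away from $1$ suffices.

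\textbf{Main obstacle.} The delicate step is the uniform two-sided estimate in Step 1. In particular I need the upper bound to lose only a factor depending on $\omega(N)$, and not on the number of divisors, when $N$ has many prime-power divisors. I would first try to bound $\sum_{m\mid N,\,m>1}m^{-k}$ by $\prod_{q\mid N}(1-q^{-k})^{-1}-1$ and compare this with $\omega(N)p^{-k}$ using $k\ge3$; the constant $8/3$ should come from this comparison. The remaining technical point is keeping the error $E$ negligible uniformly in $\ell$.
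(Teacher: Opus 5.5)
Your proposal is correct, and part (2) follows essentially the paper's argument. The hypothesis supplies $N$ with $\omega(N)\le C(k)$ and value in a prescribed interval. The sandwich $p^{-k}\lesssim c_{k,\ell}(N)/N^{k+\ell}\lesssim \frac{8}{3}\omega(N)p^{-k}$ then localizes the least prime factor, and $C(k)^{1/k}\to 1$ finishes. The one difference is that you apply the hypothesis only to intervals $(\delta,2\delta)$, not $\bigl(\frac{3}{4M^k},\frac{3}{4(M-1)^k}\bigr)$; the extra factor $2^{1/k}$ is absorbed by taking $k$ large.

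Part (1), however, takes a genuinely different route. The paper proves Proposition \ref{Prop: Density}:
\begin{itemize}
\item it selects primes $p_j$ in the shells $[2^{j/k}\delta^{-1/k},2^{(j+1)/k}\delta^{-1/k}]$, so it invokes the hypothesis at infinitely many scales;
\item it builds $M=\prod_j p_j^{m_j}$ with $1+\delta<\sigma_k(M)/M^k<1+\varepsilon$; this is where the threshold $2\delta+3\delta^2$ arises, via $\prod_{j\ge 0}(1-\delta 2^{-j})^{-1}$;
\item it lets $\ell\to\infty$ so that $\sigma_\ell(M)/M^\ell\to 1$, with an $O(M^{-k})$ error uniform in $\ell$.
\end{itemize}
You instead take $N=p^2$. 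The value is exactly $p^{-k}-p^{-\ell}+p^{-k-2\ell}-p^{-2k-\ell}$. It lies in $\bigl(p^{-k}(1-2p^{-2}),\,p^{-k}\bigr)$ and is strictly increasing in $\ell$, so a single prime of size about $\delta^{-1/k}$ suffices.

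What your route buys:
\begin{itemize}
\item It is shorter, uses the hypothesis once, and works whenever $\varepsilon/\delta$ is bounded away from $1$.
\item Its witnesses have $\omega(N)=1$, so they lie in $S_k$. Since your proof of (2) only needs intervals $(\delta,2\delta)$, the two parts become essentially converse once (2) is phrased with such intervals. This removes the deficiency noted in the remark after the theorem, that the proof of (1) does not control $\omega(N)$.
\end{itemize}
The paper's route, in exchange, gives the density of $\sigma_k(M)/M^k$ just above $1$, which has some independent interest.

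Two small repairs are needed.
\begin{itemize}
\item The value at $N=p^2$ lies slightly \emph{below} $p^{-k}$, so the slack belongs at the $\delta$ end. A prime in $[\varepsilon^{-1/k},(1-\eta)\delta^{-1/k})$ for a small fixed $\eta>0$ suffices. First replace $\varepsilon$ by $\min(\varepsilon,3\delta)$, so that this interval lies beyond the threshold of the hypothesis.
\item You cannot discard the alternative $p<\log N$ of Theorem \ref{Thm: Quick Factorization Range} by the lower bound. That bound only gives $p\gg\varepsilon^{-1/k}$ and says nothing about $\log N$. You do not need that theorem, though. Once $\omega(N)\le C(k)$ and $p$ is large, your Step 1 bound $\sum_{m\mid N,\,m>1}m^{-k}\le (1-p^{-k})^{-\omega(N)}-1\le \frac{8}{3}\omega(N)p^{-k}$ holds with no condition relating $p$ to $N$. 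This also fixes the same point that the paper's proof of (2) passes over.
\end{itemize}
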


\begin{remark}
     Since the existence of primes in the intervals $(x,cx)$ is a straightforward consequence of the prime number theorem, we conclude that $\{ c_{k,\ell}(N)/N^{k+\ell} \}_{N,\ell}$ does in fact have values in these intervals. However, (2) is not quite a converse to (1), in particular because the proof of (1) does not enforce a limitation on $\omega(N)$. It may be possible to strengthen (1) by utilizing the flexibility of $\ell>k$ more effectively. It would also be of interest whether improvements to Theorem \ref{Thm: Quick Factorization Range} might yield access to results on primes in short intervals. Variations of (1) could also be proven using the existence of many primes in an interval $(n,2n]$, which can be deduced, e.g., from Erd\H{o}s' elementary proof of Bertrand's postulate \cite{Erdos}.
\end{remark}

In light of Theorems \ref{Thm: Primes}, \ref{Thm: Quick Factorization Range} and \ref{Thm: Primes in intervals}, it is natural to inquire to what degree similar techniques could be used to study primes in arithmetic progressions. If one simply replaces divisor sums by restricted divisor sums over all divisors $d \equiv a \pmod{b}$, one does retain connections to modular form theory (via standard character twists). This approach, however, will fail to properly account for the primes in progressions, because our techniques would require that every integer $n \equiv a \pmod{b}$ contains a prime divisor in the same congruence class, which does not hold in general.

There is a modification of this idea whereby similar results can be proven, under an additional assumption. Such a result requires introducing the following restricted divisor sums.

\begin{definition}
    Let $1 \leq a \leq b$ be coprime integers and let $\ell > k \geq 0$ be integers. Let \footnote{We say $1 \in \mathcal N_{a,b}$ by convention.}
    \begin{align*}
        \mathcal N_{a,b} := \{ n \in \mathbb N : \text{Any prime divisor } p \text{ of } n \text{ satisfies } p \equiv a \pmod{b} \}, 
    \end{align*}
    and define
    \begin{align*}
        \sigma_k^{a,b}(n) := \sum_{\substack{d|n \\ d \in \mathcal N_{a,b}}} d^k
    \end{align*}
    and
    \begin{align*}
        c_{k,\ell}^{a,b}(n) := (1+n^\ell) \sigma_k^{a,b}(n) - (1+n^k)\sigma_\ell^{a,b}(n).
    \end{align*}
\end{definition}

We will show that these functions behave quite similarly to the prime-detecting quasimodular forms provided the existence of at least one prime $p \equiv a \pmod{b}$. Such a hypothesis, if established for every permissible progression, already implies the infinitude of primes in all permissible progressions, since any arithmetic progression $n \equiv a \pmod{b}$ has infinitely many disjoint subprogressions with larger common differences. Our proof, however, assumes only that the singular residue class $a \pmod{b}$ contains a prime, and so is a much weaker hypothesis. The tools involved in the proof are also very elementary; they appear to require less knowledge of analysis than the famous elementary proofs of Selberg \cite{SelbergInf,SelbergPNT} for Dirichlet's theorem and the prime number theorem for arithmetic progressions.

We now briefly state the analogs of Theorems \ref{Thm: Quick Factorization Range} and \ref{Thm: Primes in intervals} for primes in arithmetic progressions.

\begin{theorem} \label{Thm: Quick Factorization Range Progressions}
    Let $N > 3$ be a composite integer, and let $\ell > k \geq 3$ be odd integers and $2 \leq a \leq b+1$ be such that $a$, $b$ are coprime. Assume there is at least one prime congruent to $a$ modulo $b$. Then either $N$ has a prime factor $p < \log(N)$ which is congruent to $a$ modulo $b$, or $N$ has a prime factor $p$ congruent to $a$ modulo $b$ in the interval
    \begin{align*}
        \left[ \lp \dfrac{a^2}{a^2-1} \dfrac{c_{k,\ell}^{a,b}(N)}{N^{k+\ell}} \rp^{-1/k}, \lp \dfrac{a^2}{a^2-1} \dfrac{c_{k,\ell}^{a,b}(N)}{N^{k+\ell}} \rp^{-1/k} \lp \dfrac{2a^2}{a^2-1} \omega_{a,b}(N) \rp^{1/k} \right],
    \end{align*}
    where $\omega_{a,b}(N)$ is the number of distinct primes $p \equiv a \pmod{b}$ that divide $N$.
\end{theorem}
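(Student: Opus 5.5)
Throughout, set $x := c_{k,\ell}^{a,b}(N)/N^{k+\ell}$. The plan is to redo, for the restricted divisor sums, the two-sided estimate that presumably underlies Theorem \ref{Thm: Quick Factorization Range}: show that $x$ is comparable to $p^{-k}$, where $p$ is the smallest prime divisor of $N$ with $p\equiv a \pmod b$. The lower bound should be $x \geq \frac{a^2-1}{a^2}p^{-k}$, which gives the left endpoint of the interval. The upper bound should be $x \leq \frac{2a^2}{a^2-1}\,\omega_{a,b}(N)\,p^{-k}$, which gives the right endpoint after solving for $p$. The hypothesis that some prime $\equiv a \pmod b$ exists, together with $a\ge 2$ and $a\le b+1$, is used to guarantee that every such prime satisfies $p\ge a\ge 2$. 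All geometric-type sums are then controlled by factors like $\sum_{j\ge1}a^{-2j}=1/(a^2-1)$.

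\emph{Step 1 (expansion).} I will expand
\begin{align*}
x = \sum_{\substack{d\mid N\\ d\in\mathcal N_{a,b}}} \frac{(1+N^\ell)d^k-(1+N^k)d^\ell}{N^{k+\ell}}
\end{align*}
and reindex by the complementary divisor $e=N/d$. The main part of each summand is $e^{-k}-e^{-\ell}$. The remaining pieces are of size $N^{-\ell}d^k$ and $N^{-k}d^\ell$, each divided by $N^{k+\ell}$ as appropriate, and they come with the sign of the $d^k$ or $d^\ell$ term. The term $e=1$ (present only when $N\in\mathcal N_{a,b}$) contributes $N^{-\ell}-N^{-k}$, which is negligible. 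Every other admissible $e$ is divisible by some prime $q\equiv a\pmod b$ dividing $N$, so $e\ge p$. This is the point where I must be careful about how the "$a,b$-part" of $N$ enters; I expect to reduce to the cofactor structure in $\mathcal N_{a,b}$.

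\emph{Step 2 (lower bound).} I will keep only $e=p$, i.e.\ $d=N/p$, which contributes $p^{-k}-p^{-\ell}\ge p^{-k}(1-p^{k-\ell})\ge p^{-k}(1-a^{-2})$, using $\ell-k\ge2$ and $p\ge a$. All other main terms $e^{-k}-e^{-\ell}$ are nonnegative. The negative correction terms are at most about $\tau(N)N^{-k}$ in total. Here the alternative "$N$ has an admissible prime factor $p<\log N$" enters. If there is no such $p$, then $p\ge\log N$, and I need $\tau(N)N^{-k}$ to be absorbed into the slack in $p^{-k}$; for $k\ge3$ this should follow since $N^{-k}\tau(N)\ll N^{-k+\varepsilon}$ is much smaller than $(\log N)^{-k}$. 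This yields $p\ge\bigl(\frac{a^2}{a^2-1}x\bigr)^{-1/k}$.

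\emph{Step 3 (upper bound).} I will bound $x\le\sum_{e}e^{-k}$ over admissible $e>1$. I group by a chosen prime $q\mid e$ with $q\equiv a\pmod b$, which gives $x\le\sum_{q}q^{-k}\sum_{m\in\mathcal N_{a,b}}m^{-k}$. Since every prime in $\mathcal N_{a,b}$ is at least $a$, the inner sum is at most $\prod(1-q^{-k})^{-1}$ over such primes. I will bound this by $\frac{2a^2}{a^2-1}$ using $k\ge3$ and $q\ge a$. The outer sum is at most $\omega_{a,b}(N)p^{-k}$. Solving gives $p\le\bigl(\frac{a^2}{a^2-1}x\bigr)^{-1/k}\bigl(\frac{2a^2}{a^2-1}\omega_{a,b}(N)\bigr)^{1/k}$, after trading $\frac{a^2}{a^2-1}$ between the two bounds.

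The main obstacle I expect is Step 3's constant together with the error bookkeeping in Step 2. Obtaining exactly $\frac{2a^2}{a^2-1}$ requires a clean bound on $\sum_{m\in\mathcal N_{a,b}}m^{-k}$ uniformly in $b$. I would first try comparing with $\sum_{n\ge a}$ over integers, or with $\zeta(k)$-style tail estimates starting at $a$, and check that the factor $2$ absorbs the remaining primes. Handling $N\notin\mathcal N_{a,b}$ so that the normalization by $N^{k+\ell}$ still isolates $p^{-k}$ is the other point needing care.
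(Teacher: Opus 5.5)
\paragraph{The gap.} It is the claim in Step~1, reused in Step~3, that every admissible $e=N/d\neq 1$ is divisible by a prime $\equiv a \pmod b$. This is false whenever $N$ has a prime factor outside the progression, and no bookkeeping can repair it, because the statement itself fails in that case.

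Write $N=AB$, where $A$ is the largest divisor of $N$ lying in $\mathcal N_{a,b}$. The admissible $d$ are exactly the divisors of $A$, so $e=B\cdot (A/d)$. Taking $d=A$ gives $e=B$, which has no prime factor $\equiv a \pmod b$. In fact one has the exact identity
\begin{align*}
x = B^{-k}\sum_{m\mid A} m^{-k} - B^{-\ell}\sum_{m\mid A} m^{-\ell} + N^{-k-\ell}\sum_{d\mid A}\lp d^k-d^\ell\rp .
\end{align*}
So when $B>1$ the normalization isolates $B^{-k}$, not $p^{-k}$.

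A concrete counterexample: take $a=3$, $b=4$, $N=15$, $k=3$, $\ell=5$. Then $c_{3,5}^{3,4}(15)=20438784$ and $x\approx 7.97\cdot 10^{-3}$. The interval is roughly $[4.81,\,6.31]$; it brackets the cofactor $5$. The only prime factor $\equiv 3\pmod 4$ is $3$, and $3>\log 15$, so neither alternative of the theorem holds.

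Taking $N=PQ$ with large primes $P\equiv 3$, $Q\equiv 1 \pmod 4$ and $P<0.9\,Q$ gives arbitrarily large counterexamples. In these $P$ is the smallest prime factor of $N$, so neither ``$N\gg 0$'' nor the paper's working assumption that the minimal prime divisor lies in the progression rescues the statement.

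The paper's proof has the same hole. The last equality in Lemma~\ref{Lem: Coefficient bounds progressions}, $n^\ell\sum_{d\mid n,\, d\in\mathcal N_{a,b}}d^k=n^{\ell+k}\sum_{d\mid n,\, d\in\mathcal N_{a,b}}d^{-k}$, holds only when $n\in\mathcal N_{a,b}$, and that lemma's lower bound also fails at $n=15$. You were right to flag this point, but the argument can only prove the theorem under the extra hypothesis $N\in\mathcal N_{a,b}$.

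\paragraph{Within the case $N\in\mathcal N_{a,b}$.} Here your route is essentially the paper's, but it needs two repairs.

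First, the constant in Step~3. The bound $x\le\frac{2a^2}{a^2-1}\omega_{a,b}(N)p^{-k}$ only gives $p\le\bigl(\frac{2a^2}{a^2-1}\omega_{a,b}(N)/x\bigr)^{1/k}$. This exceeds the stated right endpoint $(2\omega_{a,b}(N)/x)^{1/k}$ by the factor $(\frac{a^2}{a^2-1})^{1/k}$. The lower and upper bounds on $p$ are separate inequalities, so no factor can be traded between them. What you need is $x<2\omega_{a,b}(N)p^{-k}$, and your grouping gives this anyway, since the inner sum is at most $\zeta(k)\le\zeta(3)<2$. This is cleaner than the paper's exponential estimate and needs neither $p\ge\log N$ nor $N\gg 0$.

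Second, Step~2 has no slack when $p=a$ and $\ell=k+2$. The $e=p$ term is then exactly $\frac{a^2-1}{a^2}p^{-k}+p^{-k}N^{-\ell}-p^{-\ell}N^{-k}$, which is strictly below $\frac{a^2-1}{a^2}p^{-k}$. You need another term, such as $e=N/p$, to compensate, and that term is absent when $N=p^2$. Indeed, for $N=a^2$ with $a$ prime and $\ell=k+2$ (e.g.\ $N=9$, $a=3$, $b=4$, $k=3$, $\ell=5$), one gets $x<\frac{a^2-1}{a^2}a^{-k}$. Then $p=a$ lies just to the left of the interval, while $a>\log(a^2)$. This is a further exception that the strict lower bound in the paper's lemma overlooks.
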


\begin{remark}
    Since $c_{k,\ell}^{a,b}(N)=0$ unless $N$ possesses a prime factor congruent to $a$ modulo $b$, Theorem \ref{Thm: Quick Factorization Range Progressions} yields a nontrivial result only if $N$ possesses such a prime factor.
\end{remark}

\begin{theorem} \label{Thm: Primes in intervals Progressions}
    Assume that $1 \leq a < b$ are coprime integers, and assume there is at least one prime $p \equiv a \pmod{b}$. Then following are true:
    \begin{enumerate}
        \item Assume that $k>1$ is an odd integer and that for $c>1$ and $n\gg_c0$, every interval $(n,cn]$ contains at least one prime congruent to $a$ modulo $b$. Then for $\delta>0$ sufficiently small and $\varepsilon > 2\delta + 3\delta^2$, infinitely many of the values in $\{ c_{k,\ell}^{a,b}(N)/N^{k+\ell} \}_{N,\ell}$ intersect the interval $(\delta,\varepsilon)$.
        \item Assume that $k>1$ is an odd integer and that the sets
        \begin{align*}
            S_k = \bigg\{ \dfrac{c_{k,\ell}^{a,b}(N)}{N^{k+\ell}} : \ell>k \textrm{ odd, } \omega(N) \leq C_{a,b}(k) \bigg\},
        \end{align*}
        for some function $C_{a,b}(k)$ satisfying $C_{a,b}(k)^{1/k} \to 1$ as $k \to \infty$, intersect all intervals $(\delta,\varepsilon)$ with $\varepsilon > \delta + \delta^{(k+1)/k}$ and $\delta$ small.
        Then every interval $(n,cn]$ for $c>1$ and $n \gg_k 0$ contains at least one prime congruent to $a$ modulo $b$.
    \end{enumerate}
\end{theorem}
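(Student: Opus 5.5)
The plan is to follow the proof of Theorem \ref{Thm: Primes in intervals}, replacing $\sigma_k$ by $\sigma_k^{a,b}$ throughout and using Theorem \ref{Thm: Quick Factorization Range Progressions} in place of Theorem \ref{Thm: Quick Factorization Range}. The basic computation is as follows. Suppose $N\in\mathcal N_{a,b}$, so that every divisor of $N$ lies in $\mathcal N_{a,b}$. Then
\begin{align*}
\frac{c^{a,b}_{k,\ell}(N)}{N^{k+\ell}} = \sum_{\substack{d\mid N\\ d>1}} \lp d^{-k}-d^{-\ell}\rp + O\lp N^{-k}\rp,
\end{align*}
where the error comes from the terms $\sigma_k^{a,b}(N)-N^k\sigma_\ell^{a,b}(N)$ after division by $N^{k+\ell}$. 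Note that $\sigma_k^{a,b}(N)/N^{k+\ell}\le 2N^{-\ell}$, which is negligible. The term $-N^k\sigma_\ell^{a,b}(N)/N^{k+\ell}=-\sigma_\ell^{a,b}(N)/N^\ell$ is not negligible, since it is at least $1$ in absolute value. The constant $-1$ it contributes is exactly cancelled by the constant $+1$ in $N^\ell\sigma_k^{a,b}(N)/N^{k+\ell}=\sigma_k^{a,b}(N)/N^k$, and what remains is the displayed sum.

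\textbf{Part (1).} Fix $\delta$ small and $\varepsilon>2\delta+3\delta^2$. The interval $(\varepsilon^{-1/k},\delta^{-1/k})$ has endpoint ratio $(\varepsilon/\delta)^{1/k}>2^{1/k}>1$, and its left endpoint tends to infinity as $\delta\to0$. So for $\delta$ small the hypothesis gives a prime $p\equiv a\pmod b$ with $p^{-k}\in(\delta,\varepsilon)$, and in fact in a slightly shrunken subinterval. The hypothesis also gives infinitely many primes $P\equiv a\pmod b$. For each such $P$, set $N=pP\in\mathcal N_{a,b}$. The computation above gives
\begin{align*}
\frac{c^{a,b}_{k,\ell}(N)}{N^{k+\ell}} = p^{-k}+P^{-k}+(pP)^{-k}-p^{-\ell}-P^{-\ell}-(pP)^{-\ell}+O\lp N^{-k}\rp.
\end{align*}
Taking $\ell$ large and $P\to\infty$, this tends to $p^{-k}$, which lies strictly inside $(\delta,\varepsilon)$. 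Hence infinitely many $N$ give values in $(\delta,\varepsilon)$. The slack built into $\varepsilon>2\delta+3\delta^2$ leaves room for the perturbations, exactly as in the non-progression case.

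\textbf{Part (2).} Fix $c>1$, and choose $k$ large enough that
\begin{align*}
\lp \tfrac{2a^2}{a^2-1}\,C_{a,b}(k)\rp^{1/k}<c^{1/2},
\end{align*}
which is possible because $C_{a,b}(k)^{1/k}\to1$. Given a large $n$, set
\begin{align*}
\delta=\tfrac{a^2-1}{a^2}\,(n+1)^{-k},\qquad \varepsilon=\delta+\delta^{(k+1)/k}.
\end{align*}
By hypothesis there are $N$ and $\ell$ with $\omega(N)\le C_{a,b}(k)$ and $x=c^{a,b}_{k,\ell}(N)/N^{k+\ell}\in(\delta,\varepsilon)$. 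Apply Theorem \ref{Thm: Quick Factorization Range Progressions} to this $N$. Since $\omega_{a,b}(N)\le\omega(N)\le C_{a,b}(k)$, the interval it produces is contained in
\begin{align*}
\left[\lp\tfrac{a^2}{a^2-1}\varepsilon\rp^{-1/k},\ \lp\tfrac{a^2}{a^2-1}\delta\rp^{-1/k}c^{1/2}\right].
\end{align*}
The right endpoint is $(n+1)c^{1/2}\le cn$ for large $n$. For the left endpoint, $\delta^{(k+1)/k}=\delta\cdot\delta^{1/k}$, so $\varepsilon=\delta\lp1+O(1/n)\rp$ and the left endpoint is $(n+1)\lp1+O(1/n)\rp^{-1/k}$. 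Its deviation from $n+1$ is $O(1/(kn))\cdot(n+1)=O(1/k)<1$, so it exceeds $n$. This yields a prime $p\equiv a\pmod b$ in $(n,cn]$, provided we are in the second alternative of the theorem.

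\textbf{Main obstacle.} The hard step is ruling out the first alternative, that $N$ has a prime factor $p_0\equiv a\pmod b$ with $p_0<\log N$. I would do this using the lower bound underlying the proof of Theorem \ref{Thm: Quick Factorization Range Progressions}: for the smallest such prime divisor $p_0$ one has roughly $x\ge\frac{a^2-1}{a^2}\,p_0^{-k}$. A prime $p_0\le n$ would then force $x\ge\frac{a^2-1}{a^2}\,n^{-k}>\varepsilon$, a contradiction. So I will need to check that this lower bound holds even when $N\notin\mathcal N_{a,b}$, where the divisor sums see only the $\mathcal N_{a,b}$-part of $N$. This is where the factor $a^2/(a^2-1)$, coming from $\sum_{m\ge a}m^{-2}$-type bounds, must be handled carefully. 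The remaining concern is uniformity: the dependence of $k$ on $c$ is why the statement only asserts $n\gg_k0$.
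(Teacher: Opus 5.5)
Part (2) has a genuine gap, located exactly at the step you defer, and it cannot be closed as planned. For $N\notin\mathcal N_{a,b}$ the lower bound $x\gtrsim\frac{a^2-1}{a^2}p_0^{-k}$ is false. Write $N=N_am$, where $N_a$ is the largest divisor of $N$ in $\mathcal N_{a,b}$. Then $x=m^{-k}\sum_{d\mid N_a}d^{-k}-m^{-\ell}\sum_{d\mid N_a}d^{-\ell}+O(N^{-k})$, which has size $m^{-k}$ whenever $m>1$, whatever $p_0$ is. For example, take $(a,b)=(2,3)$, $(k,\ell)=(3,5)$ and $N=143=11\cdot 13$. Then $x\approx 4.53\cdot 10^{-4}\approx 13^{-3}$, and the interval of Theorem \ref{Thm: Quick Factorization Range Progressions} is about $[11.83,\,16.41]$. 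It misses $11$, even though $11>\log 143$. So that theorem cannot be used as a black box for such $N$; the paper's \emph{mutatis mutandis} proof implicitly relies on it too and does not address this. More fundamentally, $S_k$ only requires $\omega(N)\le C_{a,b}(k)$. The element of $S_k\cap(\delta,\varepsilon)$ handed to you by the hypothesis may therefore come from, say, a prime $N=q\not\equiv a\pmod b$, with $x=q^{-k}-q^{-\ell}$, and no argument can extract a prime $\equiv a\pmod b$ from such an $N$. The route works only if $S_k$ is restricted to $N\in\mathcal N_{a,b}$. Then $\sigma^{a,b}_j(N)=\sigma_j(N)$, so $c^{a,b}_{k,\ell}(N)=c_{k,\ell}(N)$, and every prime factor of $N$ is automatically $\equiv a\pmod b$. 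The argument of Corollary \ref{Cor: Intervals Part 2} then applies verbatim with the constant $\frac34$. This also covers $a=1$, where your $\delta=\frac{a^2-1}{a^2}(n+1)^{-k}$ is $0$ and Theorem \ref{Thm: Quick Factorization Range Progressions} does not apply.

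Even with that restriction, your handling of the first alternative must change. Showing $p_0>n$ does not exclude $n<p_0<\log N$, since $N$ is not bounded in terms of $n$. Instead of excluding it, bypass the dichotomy: once $p_0>n$ and $\omega(N)\le C_{a,b}(k)$, you get directly $x<(1-p_0^{-k})^{-\omega(N)}-1<2\omega(N)p_0^{-k}$ for $n$ large, which is all the upper end of the interval uses. Also take $\varepsilon$ strictly above $\delta+\delta^{(k+1)/k}$, as the hypothesis demands. Your Part (1), by contrast, is correct and simpler than the paper's argument. The paper builds $M=\prod_j p_j^{m_j}$ with $\sigma_k(M)/M^k\in(1+\delta,1+\varepsilon)$ via a density proposition. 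You need only one prime $p\equiv a\pmod b$ with $p^{-k}\in(\delta,\varepsilon)$, then $N=pP\in\mathcal N_{a,b}$ with $P\to\infty$ and $\ell$ large. One repair is needed: $\varepsilon$ need not be small, so $\varepsilon^{-1/k}$ need not be large. Search for $p$ in $\bigl((2\delta+3\delta^2)^{-1/k},\delta^{-1/k}\bigr)$ instead; its endpoint ratio still exceeds $2^{1/k}$.
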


Since these methods describe properties of primes from the distributions of coefficients of quasimodular forms, which arise from results for the functions $\sigma_k(N)$, it would be interesting to understand if the phenomena described in this work have any implications beyond the study of prime numbers, e.g., the perfect number problem.

The paper proceeds as follows. In Section \ref{Sec: Prelims}, we lay out preliminary facts needed to prove the main results. In Section \ref{Sec: Key Lemma}, we prove Lemma \ref{Lem: Coefficient bounds}, the key lemma which acts as the engine behind the main theorems, and extensions of Lemma \ref{Lem: Coefficient bounds} to other prime-detecting quasimodular forms. In Sections \ref{Sec: Infinitude}--\ref{Sec: Intervals}, we prove Theorems \ref{Thm: Primes}, \ref{Thm: Quick Factorization Range}, and \ref{Thm: Primes in intervals}, respectively. In Section \ref{Sec: Progressions}, we prove Theorems \ref{Thm: Quick Factorization Range Progressions} and \ref{Thm: Primes in intervals Progressions}, and we discuss the possibility of proving results on Linnik's problem using this family of techniques. In Appendix \ref{Sec: Appendix A}, we discuss factorization of integers based on Theorem \ref{Thm: Quick Factorization Range} and from the partition-theoretic point of view. In Appendix \ref{Sec: Appendix B}, we illustrate an explicit examples of Theorem \ref{Thm: Quick Factorization Range}.

\section*{Acknowledgments}

The authors thank Alessandro Languasco, Pieter Moree and Robert Osburn for helpful commentary and for pointing out a flaw in one of our proofs in an earlier version of the manuscript. The views expressed in this article are those of the authors and do not reflect the official policy or position of the U.S. Naval Academy, Department of the Navy, the Department of War, or the U.S. Government. The first author was partially funded by the Irish Research Council Advanced Laureate Award IRCLA/2023/1934.

\section{Preliminaries} \label{Sec: Prelims}

\subsection{Quasimodular forms}

\begin{definition}
    Let $k \geq 2$ be an even integer. We define the Eisenstein series of weight $k$ by
    \begin{align*}
        G_k(\tau) = -\dfrac{B_k}{2k} + \sum_{n \geq 1} \sigma_{k-1}(n) q^n, \ \ \ \ \ q = e^{2\pi i \tau},
    \end{align*}
    where $\sigma_m(n) := \sum_{d|n} d^m$ and where $B_k$ are the Bernoulli numbers. We define a quasimodular form as any element of the algebra $\mathcal M := \C[G_2, G_4, G_6, G_8, \dots].$ We say that $f \in \mathcal M$ is of weight $k$ if $f$ is a sum of terms $c_j \prod_j G_j^{a_j}$ such that $\sum_j j a_j = k$.
\end{definition}

It is well known that $\mathcal M$ is closed under the differential operator $D = \dfrac{1}{2\pi i} \dfrac{d}{d\tau} = q \dfrac{d}{dq}.$ In particular, we note that $\mathcal M = \C[G_2, G_4, G_6]$, and it was shown by Ramanujan that
\begin{equation*}
    DG_2 = \frac{5}{6}G_2^2 - \frac{7}{3}G_4, \qquad DG_4 = 4G_2 G_4 - 14G_6, \text{ and} \qquad DG_6 = 6G_2 G_6 - \frac{20}{7}G_4^2.
\end{equation*}
See \cite{KanekoZagier} for more on the basic theory of quasimodular forms.

We will construct certain special elements of $\mathcal M$ as found in \cite{CraigIttersumOno}.

\begin{definition}
    For $\ell>k>0$ odd integers, define
    \begin{align*}
        f_{k,\ell}(\tau) := \lp D^\ell + 1 \rp G_{k+1}(\tau) - \lp D^k + 1 \rp G_{\ell+1}(\tau).
    \end{align*}
\end{definition}

We have the following result for $f_{k,\ell}(\tau)$. Recall Equation \eqref{fkl}.

\begin{lemma}[{\cite{CraigIttersumOno,Lelievre}}] \label{Lem: Asymptotic lemma}
    We have for $\ell>k>0$ odd integers that $f_{k,\ell} \in \mathcal M$. Moreover,
    \begin{align}\label{AsympLemEq}
       \sum_{n \geq 2} c_{k,\ell}(n) q^n = \sum_{n \geq 2} \lp \sum_{\substack{1<d<n \\ d|n}} \lp 1 + n^\ell \rp d^k - \lp 1 + n^k \rp d^\ell \rp q^n,
    \end{align}
    and for $n\geq 2, \ c_{k,\ell}(n) \geq 0$ with equality if and only if $n$ is prime.
\end{lemma}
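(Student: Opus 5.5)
This lemma is attributed to \cite{CraigIttersumOno,Lelievre}; here is how I would reprove it directly. There are three claims, and I will take them in order: membership in $\mathcal M$, the coefficient formula \eqref{AsympLemEq}, and the sign/vanishing property.

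\textbf{Membership in $\mathcal M$.} Since $k$ and $\ell$ are odd, $k+1$ and $\ell+1$ are even integers at least $2$. Hence $G_{k+1}$ and $G_{\ell+1}$ are generators of $\mathcal M$. The algebra $\mathcal M$ is closed under $D$, as recalled above via Ramanujan's identities, so $D^\ell G_{k+1}$ and $D^k G_{\ell+1}$ lie in $\mathcal M$. Because $\mathcal M$ is a $\C$-algebra, the linear combination $f_{k,\ell}$ lies in it too. Note that $f_{k,\ell}$ is a sum of pieces of different weights, which is allowed since $\mathcal M$ is not required to be graded-homogeneous.

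\textbf{The coefficient formula.} I compute coefficients term by term. Since $D(q^n)=nq^n$, the $n$-th coefficient of $(D^\ell+1)G_{k+1}$ for $n\geq 1$ is $(1+n^\ell)\sigma_k(n)$. Likewise, the $n$-th coefficient of $(D^k+1)G_{\ell+1}$ is $(1+n^k)\sigma_\ell(n)$. Therefore
\begin{align*}
c_{k,\ell}(n)=\sum_{d\mid n}\left[(1+n^\ell)d^k-(1+n^k)d^\ell\right].
\end{align*}
The term $d=1$ gives $(1+n^\ell)-(1+n^k)=n^\ell-n^k$. The term $d=n$ gives $n^k+n^{k+\ell}-n^\ell-n^{k+\ell}=n^k-n^\ell$. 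These two cancel, which leaves exactly the sum over $1<d<n$ in \eqref{AsympLemEq}. The constant term plays no role here, since the statement concerns only $n\geq 2$.

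\textbf{Sign and vanishing.} If $n$ is prime, the sum over $1<d<n$ is empty, so $c_{k,\ell}(n)=0$. If $n$ is composite, the sum is nonempty, and it suffices to show each summand is strictly positive. For $1<d<n$ I write the summand as
\begin{align*}
d^k\left[(1+n^\ell)-(1+n^k)d^{\ell-k}\right].
\end{align*}
Since $d\leq n/2$ and $\ell-k\geq 2$, we have $d^{\ell-k}\leq (n/2)^{\ell-k}$. Then $(1+n^k)d^{\ell-k}\leq (1+n^k)n^{\ell-k}2^{-(\ell-k)}$, and this is at most $\tfrac14(n^{\ell-k}+n^\ell)$. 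This is strictly less than $1+n^\ell$ because $n^{\ell-k}\leq n^\ell$. So every summand is positive, and therefore $c_{k,\ell}(n)>0$ for composite $n$.

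This last inequality is the only step requiring care, and the bound $d\le n/2$ makes it routine.
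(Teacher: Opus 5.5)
Your proof is correct in all three parts. The $q^n$-coefficient of $(D^\ell+1)G_{k+1}-(D^k+1)G_{\ell+1}$ is $(1+n^\ell)\sigma_k(n)-(1+n^k)\sigma_\ell(n)$, the $d=1$ and $d=n$ terms cancel, and your bound $d\le n/2$ makes every summand with $1<d<n$ strictly positive.

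The paper only cites this lemma, but the route it points to is different. In the remark after Lemma \ref{lem fkl Lfunc}, it reads off the factorization $L(f_{k,\ell},s)=(\zeta(s-k-\ell)-\zeta(s))(\zeta(s-\ell)-\zeta(s-k))$ to get $c_{k,\ell}(n)=\sum_{d\mid n}((n/d)^{k+\ell}-1)(d^\ell-d^k)$. Here every term is a product of two nonnegative factors. The first vanishes exactly at $d=n$ and the second exactly at $d=1$, so the sign and vanishing statement follows with no estimation at all. The identity can also be checked directly via $d\mapsto n/d$.

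Your termwise estimate is instead the argument the paper uses for the restricted sums $c_{k,\ell}^{a,b}$ in Lemma \ref{Lem: Coefficient bounds progressions} (see \eqref{Eqn: Starting Lower Bound}). There the Dirichlet series no longer factors so cleanly, because $\sigma_k^{a,b}$ pairs $\zeta$ with a restricted zeta function. So the factorized route is shorter and explains the prime detection structurally, while yours carries over essentially unchanged to the arithmetic-progression setting.
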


In \cite[Thm. $11$]{CraigIttersumOno}, it was shown that the space of prime-detecting quasimodular Eisenstein series is spanned by the forms $D^nH_k$ for $n\geq 0, k\geq 6$ where 
\begin{equation}\label{HkEq}
    H_k:= \sum_{n\geq 0} b_n(H_k):= \begin{cases} \frac{1}{6} \lp (D^2-D+1)G_2 - G_4 \rp & k=6, \\ \frac{1}{24} \lp -D^2G_{k-6} + (D^2+1)G_{k-4} - G_{k-2}\rp & k\geq 8.\end{cases}
\end{equation}
In \cite{KaneKrishnamoorthyLau}, it was demonstrated using $L$-functions for $c_{k,\ell}(N)$ that this space is precisely the space of prime-detecting quasimodular forms, settling a conjecture from \cite{CraigIttersumOno}. See also \cite{IttersumMauthOnoSingh} for an alternative proof of this basis result using Galois representations.

\subsection{$L$-functions}
\begin{definition}
    Let $f(q)=\sum_{n\geq 0}a_n q^n$ be a $q$-series where $|q|<1.$ The $L$-function associated to $f$ is defined as
    \begin{equation*}
        L(f,q):= \sum_{n\geq 1} \frac{a_n}{n^s}
    \end{equation*}
    where $s\in\mathbb{C}$ such that $Re(s) \gg 0.$
\end{definition}

We can use \cite[Eq. 5.27]{DiamondShurman} to compute $L(G_{k+1},s)$, the $L$-function for the Eisenstein series of weight $k+1$ for an odd integer $k\geq 1.$ For the sake of keeping this paper as self-contained as possible, we will derive $L(G_{k+1},s)$ explicitly. 
\begin{lemma}\label{Lem: Eis Lfunc}
For Re$(s)>k+1,$
\begin{equation*}
    L(G_{k+1},s) = \zeta(s-k)\zeta(s).
\end{equation*}
    \begin{proof}
        \begin{align*}
        L(G_{k+1},s)&:= \sum_{n\geq 1} \frac{\sigma_{k}(n)}{n^s} =\sum_{n\geq 1}\frac{1}{n^s}\sum_{d|n}d^k\\
        &\ =\sum_{d\geq 1}d^k\sum_{m\geq 1}\frac{1}{(md)^s}=\sum_{d\geq 1}\frac{1}{d^{s-k}}\sum_{m\geq 1}\frac{1}{m^s}\\
        &\ =\zeta(s-k)\zeta(s).
    \end{align*}
    \end{proof}
    
\end{lemma}

\begin{lemma}\label{lem fkl Lfunc}
Let $\ell>k>0$ be odd integers, then for $Re(s) > \ell+k+1$ we have
\begin{equation*}
    L( f_{k,\ell},s) = (\zeta(s-k-\ell) - \zeta(s))(\zeta(s-\ell) - \zeta(s-k)).
\end{equation*}
\begin{proof}
Firstly, observe that if $f(q)=\sum_{n\geq 0}a_nq^n$ then $D^mf(q) = \sum_{n\geq 0}n^ma_nq^n$, so
\begin{equation*}
    L(D^mf,s) = \sum_{n\geq 1}\frac{n^ma_n}{n^s} = \sum_{n\geq 1}\frac{a_n}{n^{s-m}} = L(f,s-m),
\end{equation*}
as long as $s > m+1$. Therefore,
\begin{align*}
    L( f_{k,\ell},s)&:= L((1+D^\ell)G_{k+1} - (1+D^k)G_{\ell+1},s)\\
    &\ = L( G_{k+1},s ) + L( D^\ell G_{k+1},s ) - L ( G_{\ell+1} ,s) - L( D^kG_{\ell+1},s )\\
    &\ =\zeta(s-k)\zeta(s) + \zeta(s-k-\ell)\zeta(s-\ell) - \zeta(s-\ell)\zeta(s) - \zeta(s-\ell - k)\zeta(s-k)\\
    &\ =(\zeta(s-k-\ell) - \zeta(s))(\zeta(s-\ell) - \zeta(s-k)).
\end{align*}
\end{proof}
\end{lemma}

\begin{remark}
    Observe that Lemma \ref{lem fkl Lfunc} shows $L( f_{k,\ell},s)$ has the form $\lp\sum_{n \geq 1}a_nn^{-s}\rp \lp\sum_{n \geq 1}b_nn^{-s}\rp$. By Dirichlet convolution, we see that
    \begin{equation*}
        c_{k,\ell}(n) = \sum_{d|n} \lp \lp\frac{n}{d}\rp^{k+\ell} - 1\rp \lp d^\ell -  d^k\rp.
    \end{equation*} The prime-detecting property of $c_{k,\ell}(n)$ is immediate from this expression.
\end{remark}

\section{Behaviour of $c_{k,\ell}(n)$ with respect to the smallest prime divisor of $n$} \label{Sec: Key Lemma}

We now prove the following result which explains the source of the visible strips in Figure \ref{fig:c37}. This result is key to the proofs of Theorems \ref{Thm: Primes} and \ref{Thm: Quick Factorization Range}, and motivates the proofs for Theorems \ref{Thm: Quick Factorization Range Progressions} and \ref{Thm: Primes in intervals Progressions}.

\begin{lemma} \label{Lem: Coefficient bounds}
    Let $f_{k,\ell} = \sum_{n \geq 0} c_{k,\ell}(n) q^n$ for odd integers $\ell>k>1$. For fixed $n$, let $p$ be the smallest prime dividing $n$. Then we have for $n \geq 2$ that
    \begin{equation*}
        n^{\ell+k} p^{-k} \lp 1 - p^{k-\ell} \rp + n^k - n^\ell \leq c_{k,\ell}(n) < \lp 1 + n^\ell \rp n^k \sum_{\substack{d|n \\ p \leq d \leq \frac np}} \dfrac{1}{d^k}.
    \end{equation*}
\end{lemma}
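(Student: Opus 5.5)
The plan is to work directly with the divisor-sum formula \eqref{AsympLemEq} from Lemma \ref{Lem: Asymptotic lemma}:
\begin{equation*}
c_{k,\ell}(n)=\sum_{\substack{d\mid n\\ 1<d<n}} T(d), \qquad T(d):=(1+n^\ell)d^k-(1+n^k)d^\ell .
\end{equation*}
Both bounds come from looking at the individual terms $T(d)$. First I record that the divisors $d$ with $1<d<n$ are exactly the divisors with $p\le d\le n/p$. Indeed, the smallest such divisor is $p$, and the map $d\mapsto n/d$ preserves the set.

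\emph{Step 1: every term is nonnegative.} We have $T(d)\ge 0$ iff $d^{\ell-k}\le (1+n^\ell)/(1+n^k)$. For $d\le n/2$ and $\ell-k\ge 2$ we get $d^{\ell-k}\le n^{\ell-k}2^{-(\ell-k)}\le n^{\ell-k}/4$. On the other hand, $(1+n^\ell)/(1+n^k)\ge n^\ell/(2n^k)=n^{\ell-k}/2$, since $1+n^k\le 2n^k$. Hence $T(d)>0$ for every proper nontrivial divisor $d$.

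\emph{Step 2: lower bound.} By Step 1 we may discard all terms except $d=n/p$. Expanding gives
\begin{equation*}
T(n/p)=n^{k+\ell}p^{-k}-n^{k+\ell}p^{-\ell}+(n/p)^k-(n/p)^\ell=n^{\ell+k}p^{-k}\bigl(1-p^{k-\ell}\bigr)+(n/p)^k-(n/p)^\ell .
\end{equation*}
The function $x\mapsto x^k-x^\ell$ is decreasing for $x\ge 1$, and $1\le n/p\le n$. Therefore $(n/p)^k-(n/p)^\ell\ge n^k-n^\ell$, which gives the stated lower bound.

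If $n$ is prime, the sum is empty and $c_{k,\ell}(n)=0$. The lower-bound expression with $p=n$ is $n^\ell-n^k+n^k-n^\ell=0$, so that side still holds. The strict upper bound, however, degenerates to $0<0$ in this case. I will therefore treat the upper bound for composite $n$ (for prime $n$ only the non-strict version holds), and I will flag this point.

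\emph{Step 3: upper bound.} Each term satisfies $T(d)<(1+n^\ell)d^k$, because $(1+n^k)d^\ell>0$; strictness holds once the sum is nonempty, that is, once $n$ is composite. Summing over $p\le d\le n/p$ and writing $d^k=n^k/(n/d)^k$, the involution $d\mapsto n/d$ of the index set turns $\sum d^k$ into $n^k\sum d^{-k}$. This yields exactly the right-hand side.

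The only step needing care is Step 1, the positivity of every term $T(d)$. Here the hypothesis $\ell-k\ge2$ (both exponents are odd) is what makes the constant $1/4$ beat $1/2$. The prime case of the strict inequality is an edge case to note rather than a real obstacle.
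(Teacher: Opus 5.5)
Your proof is correct and follows essentially the same route as the paper. Both expand $c_{k,\ell}(n)$ termwise via \eqref{AsympLemEq}, bound it below by retaining specific divisor terms, and bound it above by flipping $d\mapsto n/d$ and discarding the negative $d^{\ell}$-sum. Your differences are small refinements. Keeping only the term $d=n/p$ spares you the paper's separate $n=p^2$ case. Your Step 1 supplies the termwise positivity that the paper uses without comment. And you are right that the strict upper bound degenerates to $0<0$ for prime $n$, an edge case the paper's argument also passes over.
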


\begin{remark}
    Using integration, one can show $c_{k,\ell}(n)< n^{\ell+k}p^{-k}\lp 1+ \frac{p}{k-1} \rp.$ Thus $c_{k,\ell}(n)/n^{k+\ell}$ is forced into arbitrarily narrow intervals for large values of $k$ and $\ell$.
\end{remark}

\begin{proof}
    If $n \not = p^2$, then by (\ref{AsympLemEq}) we have
    \begin{align*}
        c_{k,\ell}(n) \geq \sum_{d = p, \frac np} &\lp 1 + n^\ell \rp d^k - \lp 1 + n^k \rp d^\ell = \lp 1 + n^\ell \rp \lp p^k + \lp \dfrac{n}{p} \rp^k \rp - \lp 1 + n^k \rp \lp p^\ell + \lp \dfrac{n}{p} \rp\ell \rp 
        \\ &= n^{\ell+k}\lp \dfrac{1}{p^k} - \dfrac{1}{p^\ell} \rp + p^k - p^\ell + \lp \lp \dfrac{n}{p} \rp^\ell - \lp \dfrac{n}{p} \rp^k \rp \lp p^{\ell+k} - 1 \rp 
        \\ &\geq n^{\ell+k} p^{-k} \lp 1 - p^{k-\ell} \rp + p^k - p^\ell
        \ \geq \ n^{\ell+k} p^{-k} \lp 1 - p^{k-\ell} \rp + n^k - n^\ell.
    \end{align*}
    If $n = p^2$, then we have
    \begin{align*}
        c_{k,\ell}(n) = \lp 1 + p^{2\ell} \rp p^k - \lp 1 + p^{2k} \rp p^\ell = p^{2\ell+k} - p^{\ell + 2k} + p^k - p^\ell \geq p^{2\ell+2k} p^{-k} \lp 1 - p^{k-\ell} \rp + p^{2k} - p^{2\ell},
    \end{align*}
    which also proves the result.
    This proves the first inequality.
    
    For the upper bound, assuming now that $k>1$, if $p$ is the smallest prime divisor of $n$, then we have by flipping $d \to n/d$ that
    \begin{align*}
        c_{k,\ell}(n) = &\sum_{\substack{d|n \\ p \leq d \leq \frac np}} \lp 1 + n^\ell \rp d^k - \lp 1 + n^k \rp d^\ell
        = \sum_{\substack{d|n \\ p \leq d \leq \frac np}} \lp 1 + n^\ell \rp \dfrac{n^k}{d^k} - \lp 1 + n^k \rp \dfrac{n^\ell}{d^\ell}
        \\ \ &= \lp 1 + n^\ell \rp n^k \sum_{\substack{d|n \\ p \leq d \leq \frac np}} \dfrac{1}{d^k} - \lp 1 + n^k \rp n^\ell \sum_{\substack{d|n \\ p \leq d \leq \frac np}} \dfrac{1}{d^\ell} \ 
        < \ \lp 1 + n^\ell \rp n^k \sum_{\substack{d|n \\ p \leq d \leq \frac np}} \dfrac{1}{d^k},
    \end{align*}
    which proves the result.
\end{proof}

Our paper will focus its remaining effort on the $f_{k,\ell}$ forms specifically due to their particularly clean structure and coefficients. However, similar results could be derived using in place of $f_{k,\ell}$ the prime-detecting quasimodular forms $H_k$ of \cite{CraigIttersumOno} which, together with their derivatives, form a basis for the space of prime-detecting forms \cite{CraigIttersumOno,KaneKrishnamoorthyLau}. Recall (\ref{HkEq}).

\begin{lemma} \label{Lem: H_k coefficient bounds}
    Let $p$ be the smallest prime divisor of $n$, then
    \begin{equation*}
        n^3(p^{-1}-p^{-3}) \leq 6b_n(H_6) \leq n^3\left[ \sum_{p\leq d \leq \frac np}\frac  1d -\frac{1}{d^3}\right].
    \end{equation*}
    For $k\geq 8$,
    \begin{equation*}
        24b_n(H_k) \geq n^{k-3}\lp \frac{1}{p^{k-5}} - \frac{1}{p^{k-3}} \rp
    \end{equation*}
    and
    \begin{equation*}
        24b_n(H_K) \leq n^{k-3} \lp \frac{1}{p^{k-5}} -  \frac{1}{p^{k-3}} + \frac{1}{(4-k)p^{k-4}} - \frac{1}{(6-k)p^{k-6}}\rp + n^{k-5} \lp \frac{1}{p^{k-5}} - \frac{1}{(6-k)p^{k-6}}\rp.
    \end{equation*}
    
\end{lemma}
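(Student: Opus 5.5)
The plan is to treat $H_6$ and $H_k$ for $k \geq 8$ exactly as in the proof of Lemma \ref{Lem: Coefficient bounds}. First write the coefficients as divisor sums and check that the terms $d=1$ and $d=n$ cancel. Then apply the flip $d \mapsto n/d$, so that the sum is indexed by $p \leq d \leq n/p$ and its size is controlled by negative powers of $d$.

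\textbf{Case $H_6$.} From \eqref{HkEq} and $D^m G_j = \sum n^m \sigma_{j-1}(n) q^n$, for $n \geq 1$ we get
\[
6 b_n(H_6) = \sum_{d \mid n} \lp (n^2-n+1)d - d^3 \rp .
\]
The term $d=1$ gives $n^2-n$ and the term $d=n$ gives $n-n^2$, so these cancel. Hence
\[
6b_n(H_6) = \sum_{d\mid n,\, p\le d\le n/p} \lp (n^2-n+1)\tfrac nd - \tfrac{n^3}{d^3} \rp .
\]
\emph{Upper bound.} It is immediate from $(n^2-n+1)n \leq n^3$ for $n\ge 1$.

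\emph{Lower bound.} Each unflipped term $(n^2-n+1)d - d^3$ with $d \le n/2$ is positive, since $d^2 \le n^2/4 < n^2-n+1$. So it suffices to keep a single term and discard the rest, as in Lemma \ref{Lem: Coefficient bounds}. The natural choice is the flipped term at $d=p$, namely $(n^2-n+1)n/p - n^3/p^3$. This equals $n^3(p^{-1}-p^{-3})$ minus the error $(n^2-n)n/p$. That error must be absorbed by the other positive terms, for example the term at $d=n/p$, which is roughly $pn^2$.

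\textbf{Main obstacle.} This absorption only works directly when $p^2 \gtrsim n$. In general I expect to need either the full contribution of all divisors between $p$ and $n/p$, or a slight weakening of the constant. I would first try pairing $d$ with $n/d$ and checking that each pair is at least its share of $n^3(p^{-1}-p^{-3})$.

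\textbf{Case $H_k$, $k\ge 8$.} The coefficient is
\[
24b_n(H_k) = \sum_{d\mid n} \lp -n^2 d^{k-7} + (n^2+1)d^{k-5} - d^{k-3} \rp .
\]
One checks that the $d=1$ and $d=n$ terms vanish. Flipping $d\mapsto n/d$ turns the summand into
\[
n^{k-3}\lp d^{5-k}-d^{3-k}\rp + n^{k-5}\lp d^{5-k} - d^{7-k}\rp .
\]
\emph{Lower bound.} The first bracket is nonnegative and at $d=p$ gives exactly the main term $n^{k-3}(p^{5-k}-p^{3-k})$. The second bracket is negative, of size $n^{k-5}d^{7-k}$. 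It has to be dominated by the surplus from the other terms, or by the slack $n^{k-3}(d^{5-k}-d^{3-k}) \gg n^{k-5}d^{7-k}$, which follows from $d^2\le n$. Verifying this domination termwise is again the delicate point.

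\emph{Upper bound.} Drop the negative pieces appropriately and bound $\sum_{d\ge p} d^{-m}$ by $p^{-m} + \int_p^\infty x^{-m}\,dx = p^{-m} + p^{1-m}/(m-1)$ with $m = k-5$ and $m = k-7$. Note that $p^{1-m}/(m-1)$ equals $-1/((4-k)p^{k-6})$ for $m=k-5$, and $-1/((6-k)p^{k-8})$ for $m=k-7$. Collecting these produces correction terms of the shape $\frac{1}{(4-k)p^{k-4}}$ and $\frac{1}{(6-k)p^{k-6}}$ appearing in the statement; matching the exact exponents of $p$ there is the bookkeeping to verify. The main term $n^{k-3}(p^{5-k}-p^{3-k})$ is the $d=p$ contribution, and the $n^{k-5}$ line comes from the $n^{k-5}d^{5-k}$ piece summed the same way.
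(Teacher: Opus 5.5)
Your upper bound for $H_6$ is correct and is the paper's argument. The gap is in the lower bounds: you leave both open, and neither can be closed as stated.

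For $H_6$, your "main obstacle" comes from an arithmetic slip. Since $(n^2-n+1)\frac np=\frac{n^3}{p}-\frac{n^2-n}{p}$, the deficit is $(n^2-n)/p$, not $(n^2-n)n/p$. Add to your kept term the term from the divisor $p$ itself, namely $(n^2-n+1)p-p^3$. This gives
\[
\Bigl[(n^2-n+1)\tfrac np-\tfrac{n^3}{p^3}\Bigr]+\bigl[(n^2-n+1)p-p^3\bigr]=n^3\bigl(p^{-1}-p^{-3}\bigr)+\bigl(p-p^{-1}\bigr)\bigl(n^2-n-p^2\bigr),
\]
and the last summand is $\ge 0$ because $n\ge p^2$. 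The remaining terms $d(n^2-n+1-d^2)$ are positive. This two-term pairing is exactly the paper's proof, and it requires $p\neq n/p$. For $n=p^2$ there is only one middle divisor, and
\[
6b_{p^2}(H_6)=p^5-2p^3+p<p^5-p^3 .
\]
For example, at $n=4$ you get $18<24$. The bound also fails for $n$ prime, where the left side is $n^2-1>0$. So the paper's remark that $n=p^2$ "works similarly" is wrong, and no argument can cover that case.

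For $k\ge 8$, the lower bound is false in general. The "delicate point" you flag is therefore a real obstruction; the paper only says "the proof is similar". The summand is $d^{k-7}(d^2-1)(n^2-d^2)\ge 0$. For $n=2q$ with $q$ an odd prime, only $d=2$ and $d=q$ contribute, and
\[
24b_{2q}(H_k)-n^{k-3}\bigl(2^{5-k}-2^{3-k}\bigr)=3\cdot 2^{k-5}(q^2-1)-3q^{k-5},
\]
which is negative for large $q$. Concretely, $24b_{22}(H_8)=482040<483153$. What your single-term idea does prove is
\[
24b_n(H_k)\ge n^{k-3}(p^{5-k}-p^{3-k})-n^{k-5}(p^{7-k}-p^{5-k}).
\]

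The upper bound for $k\ge 8$ is true, but your bookkeeping would not produce it. You must keep the $-n^{k-3}d^{3-k}$ piece and apply the integral comparison to $g(x)=x^{5-k}-x^{3-k}$. This function is nonnegative and decreasing on $[2,\infty)$, so
\[
\sum_{d\ge p}g(d)\le g(p)+\tfrac{p^{6-k}}{k-6}-\tfrac{p^{4-k}}{k-4}.
\]
The $\frac{1}{(4-k)p^{k-4}}$ term comes from this, i.e. from exponent $k-3$. Only $-n^{k-5}d^{7-k}$ should be discarded, and then $\sum_{d\ge p}d^{5-k}\le p^{5-k}+\frac{p^{6-k}}{k-6}$ gives the $n^{k-5}$ line. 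The case $m=k-7$ never needs bounding, and its integral diverges at $k=8$. Also, at $m=k-5$ the quantity $p^{1-m}/(m-1)$ equals $-1/((6-k)p^{k-6})$, not $-1/((4-k)p^{k-6})$.
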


\begin{proof}
We first consider the case when $k=6$ and $n\geq 1$ with smallest prime divisor $p$ and $n \not = p^2$, i.e.,
\begin{align*}
    6b_n(H_6) = \sum_{d|n} &\lp n^2 - n + 1 \rp d - d^3=\lp n^2-n+1\rp\sum_{d|n}d - \sum_{d|n}d^3\\
    &\geq (n^2-n+1)\sum_{p,\frac np} d - \sum_{p,\frac np} d^3= \lp n^2-n+1\rp\left[p+\frac np\right]-\left[p^3 + \lp\frac np\rp^3\right].
\end{align*}
Grouping the terms with respect to powers of $n$ yields the first lower bound, and the case $n=p^2$ works similarly. We compute the upper bound as follows.
\begin{align*}
    6b_n(H_6)&= \lp n^2-n+1\rp\sum_{d|n}d - \sum_{d|n}d^3=\lp n^2-n+1\rp\sum_{d|n} \frac np - \sum_{d|n}\lp\frac np\rp^3\\
    &\qquad = n^3\left[ \sum_{d|n}\frac  1d -\lp\frac1d\rp^3\right] + \lp n-n^2\rp\sum_{d|n}\frac 1d\leq n^3\left[ \sum_{p\leq d \leq \frac np}\frac  1d -\frac{1}{d^3}\right].
\end{align*}
The proof is similar for $k \geq 8$.
\end{proof}

\section{Proof of Theorem \ref{Thm: Primes}} \label{Sec: Infinitude}

We now prove Theorem \ref{Thm: Primes} by constructing and analyzing the Dirichlet series for $f_{k,\ell}(\tau)$.

\begin{proof}
    Assume that there are finitely many primes, which we call $p_1 < p_2 < \cdots < p_r$. For $\mathcal P$ any subset of these primes and for $s>1,$ define
    \begin{equation*}
        \zeta(s; \mathcal P):=\sum_{\substack{n\geq 1\\(n,p)=1 \\ \text{ for all } p \in \mathcal P}} \frac{1}{n^s}.
    \end{equation*}
    This zeta function has the Euler product $\zeta(s;\mathcal P) = \prod\limits_{p\not\in \mathcal P}(1-p^{-s})^{-1}$. In particular, assuming that there are finitely many primes, observe that for $\mathcal P_r := \{ p_1, p_2, \dots, p_{r-1} \}$ and $P_r := p_1 p_2 \cdots p_{r-1}$ we have
    \begin{equation}\label{Eqn: ResZeta}
        \zeta(s;\mathcal P_r) = \sum\limits_{\substack{n\geq 1\\ (n,P_r)=1}}\frac{1}{n^s} = \frac{1}{1-p_r^{-s}}.
    \end{equation}
        
    In general, for a $q$-series $f(q) = \sum\limits_{n\geq 0} c(n)q^n$, and define a set of primes $\mathcal P$ the Dirichlet series
    $$L(f,s;P):= \sum\limits_{\substack{n\geq 1\\(n,p)=1\\\textrm{for all } p \in \mathcal P}} c(n)n^{-s}.$$ Now, by Lemma \ref{Lem: Coefficient bounds} and the fact that $x^{-k} - x^{-\ell}$ is a decreasing function for $\ell > k \geq 3$ odd and for $x \geq 2$, we have $c_{k,\ell}(n) \geq n^{k+\ell}\lp p^{-k} - p^{-\ell} \rp \geq n^{k+\ell}\lp p_r^{-k} - p_r^{-\ell} \rp$ for composite $n$. Then, for odd integers $\ell>k>0$, we have by this result and the vanishing of $c_{k,\ell}(n)$ at primes that
    \begin{align*}
        L(f_{k,\ell},s;\mathcal P_r) = \sum_{\substack{n\geq 1\\(n,P_r)=1}}\frac{c_{k,\ell}(n)}{n^s} &\geq \sum_{\substack{n\geq 1\\(n,P_r)=1}} \frac{(p_r^{-k} - p_r^{-\ell})n^{k+\ell}}{n^s} - (p_r^{-k}-p_r^{-\ell}) - \frac{(p_r^{-k} - p_r^{-\ell})p_r^{k+\ell}}{p_r^s}\\
        &=(p_r^{-k} - p_r^{-\ell})\lp\zeta(s-k-\ell;\mathcal P_r) - 1 - p_r^{k+\ell-s}\rp,
    \end{align*}
    and by \eqref{Eqn: ResZeta} it follows that
    \begin{align*}
        L(f_{k,\ell},s;\mathcal P_r) = (p_r^{-k} - p_r^{-\ell})\lp \frac{1}{1-p_r^{k+\ell-s}}  - 1 - p_r^{k+\ell-s}\rp = \frac{p_r^{2(k+\ell-s)}(p_r^{-k} - p_r^{-\ell})}{1-p_r^{k+\ell-s}}
    \end{align*}

    On the other hand, by Lemma \ref{lem fkl Lfunc} and \eqref{Eqn: ResZeta}, we have
    \begin{equation*}
        L(f_{k,\ell},s;\mathcal P_r) = \lp\frac{1}{1-p_r^{k+\ell-s}} - \frac{1}{1-p_r^{-s}}\rp\lp \frac{1}{1-p_r^{\ell-s}} - \frac{1}{1-p_r^{k-s}} \rp.
    \end{equation*}
    Therefore, for all $s>k+\ell+1$, we must have
    \begin{equation*}
        \lp\frac{1}{1-p_r^{k+\ell-s}} - \frac{1}{1-p_r^{-s}}\rp\lp \frac{1}{1-p_r^{\ell-s}} - \frac{1}{1-p_r^{k-s}} \rp \geq \frac{p_r^{2(k+\ell-s)}(p_r^{-k} - p^{-\ell})}{1-p_r^{k+\ell-s}}.
    \end{equation*}
    Simplifying this expression, we obtain
    \begin{align*}
        p_r^{-k} - p_r^{-\ell} &\leq \frac{1-p_r^{k+\ell-s}}{p_r^{2(k+\ell-s)}}\cdot\frac{p_r^{-s}(p_r^{k+\ell}-1)}{(1-p_r^{k+\ell-s})(1-p_r^{-s})}\cdot\frac{p_r^{-s}(p_r^\ell-p_r^k)}{(1-p_r^{\ell-s})(1-p_r^{k-s})}\\
        &=\frac{(p_r^{k+\ell}-1)(p_r^\ell - p_r^k)}{p_r^{2(k+\ell)}(1-p_r^{-s})(1-p_r^{\ell-s})(1-p_r^{k-s})}.
    \end{align*}
    Since this inequality holds true for every $s > \ell + k + 1$, and the left-hand side is independent of $s$, we may take $s \to \infty$ and obtain the inequality
    \begin{equation*}
        p_r^{-k} - p_r^{-\ell} \leq \frac{(p_r^{k+\ell}-1)(p_r^\ell - p_r^k)}{p_r^{2(k+\ell)}} = (1-p_r^{-(k+\ell)})(p_r^{-k} - p_r^{-\ell}) < p_r^{-k} - p_r^{-\ell},
    \end{equation*}
    a contradiction. Therefore, there must be infinitely many primes.
\end{proof}

\section{Proof of Theorem \ref{Thm: Quick Factorization Range}}

Let $\ell > k \geq 3$ be odd integers and $N \geq 3$ be a composite integer with minimal prime divisor $p \geq \log(N)$. Consider the normalized coefficients $\theta_{k,\ell}(N) := \frac{c_{k,\ell}(N)}{N^{\ell+k}}$. Then by Lemma \ref{Lem: Coefficient bounds}, we have
\begin{align*}
    \dfrac{1}{p^k} - \dfrac{1}{p^\ell} < \theta_{k,\ell}(N) < \sum_{\substack{d|N \\ p \leq d \leq N/p}} \dfrac{1}{d^k}.
\end{align*}
Note that since $p \geq 2$ and $k \geq 3$, we can verify with classical integral bounds that $\theta_{k,\ell}(n) < 1$.

For $N \gg 0$, we have
\begin{align*}
    \sum_{\substack{d|N \\ p \leq d \leq N/p}} \dfrac{1}{d^k} < \lp \dfrac{1}{1 - p^{-k}} \rp^{\omega(N)} - 1, 
\end{align*}
Since $\frac{1}{1-x} \leq e^{x+x^2} \leq e^{9x/8}$ for $x \leq \frac 18$, we may apply this inequality in the case $x = p^{-k}$ and obtain
\begin{align*}
    \sum_{\substack{d|N \\ p \leq d \leq N/p}} \dfrac{1}{d^k} < e^{9\omega(N)/8p^k} - 1 < \dfrac{2\omega(N)}{p^k}.
\end{align*}
since $e^{9x/8} - 1 < 2x$ when $0 \leq x \leq \frac 12$, where the final inequality follows from $\omega(N) < \log(N) \leq p$. Thus, for $N \gg 0$,
\begin{align*}
    \sum_{\substack{d|N \\ p \leq d \leq N/p}} \dfrac{1}{d^k} < \dfrac{2 \omega(N)}{p^k}.
\end{align*}
Furthermore, since $p^{-k} - p^{-\ell} \geq \frac 34 p^{-k}$ and $N \gg 0$, we have 
\begin{align} \label{Eqn: Wide interval eqn}
    \dfrac{3}{4p^k} < \theta_{k,\ell}(N) < \dfrac{2 \omega(N)}{p^k}.
\end{align}
Since $\frac 34 x^{-k}$ is positive and decreasing for $x > 1$ and $k$ fixed, we can identify the unique minimal integer $x_N$ such that
\begin{align*}
    \dfrac{3}{4 x_N^k} < \theta_{k,\ell}(N).
\end{align*}
Note that we then have $p \geq x_N$.

If for $N \gg 0$ we had the simultaneous inequalities $p > x_N + \delta_N$ for some $\delta_N>0$ and
\begin{align} \label{Eqn: delta xn ineq}
    \dfrac{3}{4 x_N^k} \geq \dfrac{2 \omega(N)}{\lp x_N + \delta_N \rp^k},
\end{align}
we obtain a contradiction to \eqref{Eqn: Wide interval eqn}. Thus, if we choose some $\delta_N$ satisfying \eqref{Eqn: delta xn ineq}, then we must have $x_N \leq p < x_N + \delta_N$ for $N \gg 0$. By \eqref{Eqn: delta xn ineq}, we may choose any
\begin{align*}
    \delta_N \geq \lp \lp \dfrac{8}{3} \omega(N) \rp^{1/k} - 1 \rp x_N.
\end{align*}
Therefore by choosing equality above, $N$ must have a prime divisor in the interval
\begin{align*}
     \left[ x_N, \lp \dfrac{8}{3} \omega(N) \rp^{1/k} x_N \right].
\end{align*}
Finally, from the inequalities $\frac 34 x^{-k}_N < c_{k,\ell}(N)/N^{k+\ell} \leq \frac 34 \lp x_N - 1 \rp^{-k}$, which hold by the definition of $x_N$, we have
\begin{align*}
    \lp \dfrac 43 \dfrac{c_{k,\ell}(N)}{N^{k+\ell}} \rp^{-1/k} < x_N \leq \lp \dfrac 43 \dfrac{c_{k,\ell}(N)}{N^{k+\ell}} \rp^{-1/k} + 1,
\end{align*}
which completes the proof.

\section{Proof of Theorem \ref{Thm: Primes in intervals}} \label{Sec: Intervals}

For any $N \geq 1$ and $\ell > k \geq 3$, we have
\begin{align} \label{Eqn: Norm ckl}
    \dfrac{c_{k,\ell}(N)}{N^{\ell+k}} = \lp 1 + \dfrac{1}{N^\ell} \rp \dfrac{\sigma_k(N)}{N^k} - \lp 1 + \dfrac{1}{N^k} \rp \dfrac{\sigma_\ell(N)}{N^\ell}.
\end{align}

To prove Theorem \ref{Thm: Primes in intervals}, we will demonstrate that values of \eqref{Eqn: Norm ckl}, as $\ell$ and $N$ vary simultaneously, intersect certain shrinking intervals to the right of zero. We then use Theorem \ref{Thm: Quick Factorization Range} to guarantee a prime in every interval $(x,cx)$ for $c>1$ and $x\gg_c 0$. We note that efforts to extend our results to primes in short intervals appear to require improvements to Theorem \ref{Thm: Quick Factorization Range}.

The proof of Theorem \ref{Thm: Primes in intervals} requires the key density calculation.

\begin{proposition} \label{Prop: Density}
    Assume that for $c>1$ and $n\gg_c0$, the interval $(n,cn]$ contains at least one prime. Let $k>1$ be an odd integer and let $\delta > 0$ and $\varepsilon > 2\delta + 3\delta^2$ be positive real numbers. Then, if $\delta$ is sufficiently small as a function of $k$, there are infinitely many integers $M\gg0$ such that
    \begin{align*}
        1+\delta < \dfrac{\sigma_k(M)}{M^k} < 1 + \varepsilon.
    \end{align*}
\end{proposition}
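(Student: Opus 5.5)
We construct $M$ as a product $M = p_1 p_2$ of two primes chosen from intervals guaranteed by the hypothesis. Since $\sigma_k$ is multiplicative and $\sigma_k(p)/p^k = 1 + p^{-k}$, we have
\begin{align*}
\frac{\sigma_k(p_1p_2)}{(p_1p_2)^k} = (1+p_1^{-k})(1+p_2^{-k}) = 1 + p_1^{-k} + p_2^{-k} + (p_1p_2)^{-k}.
\end{align*}
The plan is to put both $p_1^{-k}$ and $p_2^{-k}$ into a window around $\delta$, with the lower ends tuned so that the total excess exceeds $\delta$. The upper ends should be tuned so that the excess stays below $2\delta + 3\delta^2 < \varepsilon$. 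The cross term is at most roughly $\delta^2$, which is where the slack $3\delta^2$ is used.

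First, pick a prime $p_1$ with $\delta^{-1/k} < p_1 \le c\,\delta^{-1/k}$, for a constant $c>1$ to be chosen. This is possible once $\delta^{-1/k}$ exceeds the threshold $n_0(c)$ in the hypothesis, which is where "$\delta$ sufficiently small as a function of $k$" enters. This gives $c^{-k}\delta \le p_1^{-k} < \delta$.

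Choose $c$ with $c^{-k} > 1/2$, so that taking $p_2$ from the same kind of interval gives $p_1^{-k}+p_2^{-k} > 2c^{-k}\delta > \delta$. This yields the lower bound. For the upper bound, $p_1^{-k}+p_2^{-k}+(p_1p_2)^{-k} < 2\delta + \delta^2 < \varepsilon$. So in fact a single choice of $c \in (1, 2^{1/k})$ handles both sides simultaneously.

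To get infinitely many $M$, we need distinct values. The simplest route is to allow $p_1 \ne p_2$ and vary the primes across several disjoint subintervals of $(\delta^{-1/k}, c\delta^{-1/k}]$. A cleaner route is to use $M = p_1 p_2 m$ with extra large prime factors $q \to \infty$. Since $(1+q^{-k}) \to 1$, multiplying by such a factor perturbs the ratio by an arbitrarily small amount. Because our inequalities were strict with room to spare, infinitely many choices of $q$ (supplied by the same hypothesis, or by Theorem \ref{Thm: Primes}) keep the value inside $(1+\delta, 1+\varepsilon)$ while making $M$ distinct and arbitrarily large.

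The main obstacle is this bookkeeping: the strict margins must absorb both the cross term and the perturbation from the extra factor. We also need to check that the requirement on $\delta$ depends only on $k$ and on the threshold $n_0(c)$ for the fixed $c = c(k)$. If a single prime suffices, one could instead take $p_1^{-k}$ in $(\delta,\varepsilon)$ directly. However, the two-prime construction matches the stated condition $\varepsilon > 2\delta+3\delta^2$, so I would follow that route.
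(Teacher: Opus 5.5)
Your argument is correct, modulo the bookkeeping noted at the end, and it is simpler than the paper's.

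\textbf{Comparison with the paper.} The paper picks a chain of primes $p_j$ with $2^{j/k}\delta^{-1/k}\le p_j\le 2^{(j+1)/k}\delta^{-1/k}$, so that $p_j^{-k}\approx\delta 2^{-j}$. It then uses a logarithm/Taylor estimate to find a minimal $N$ with $\prod_{j=0}^{N}(1+p_j^{-k})>1+\delta$. Infinitely many $M$ come from passing to prime powers $p_j^{m_j}$. The upper bound comes from $P_\infty(\delta)=\prod_{j\ge 0}(1-\delta 2^{-j})^{-1}=1+2\delta+\tfrac{8}{3}\delta^2+\cdots$, and this expansion is where the hypothesis $\varepsilon>2\delta+3\delta^2$ originates.

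You instead place two Euler factors directly in $[1+c^{-k}\delta,1+\delta)$. Infinitude then comes from an auxiliary prime $q\to\infty$: its factor $1+q^{-k}>1$ preserves the lower bound automatically and costs arbitrarily little on the upper bound. What your route buys:
\begin{itemize}
\item it avoids the limiting argument and the generating function;
\item it needs only $\varepsilon>2\delta+\delta^2$;
\item it keeps $\omega(M)\le 3$ uniformly in $\delta$, whereas the paper's number of primes $N+1$ can be forced to grow like $\log_2(1/\delta)$.
\end{itemize}
The last point bears on the remark after Theorem \ref{Thm: Primes in intervals} that part (1) does not limit $\omega(N)$.

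Your closing aside is also right. Since $\varepsilon>2\delta$, apply the hypothesis with $c=2^{1/(2k)}$ at $n=(2\delta)^{-1/k}$. This gives a prime $p\in\bigl((2\delta)^{-1/k},\delta^{-1/k}\bigr)$, which already satisfies $1+\delta<1+p^{-k}<1+\varepsilon$, and $M=pq$ then works.

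\textbf{Bookkeeping to fix in a write-up.}
\begin{itemize}
\item You must require, not merely ``allow'', $p_1\neq p_2$. If $p_1=p_2=p$, then $M=p^2$ and $\sigma_k(p^2)/p^{2k}=1+p^{-k}+p^{-2k}$, which can fall below $1+\delta$. Two distinct primes come from applying the hypothesis with $\sqrt{c}$ on $(\delta^{-1/k},\sqrt{c}\,\delta^{-1/k}]$ and on $(\sqrt{c}\,\delta^{-1/k},c\,\delta^{-1/k}]$.
\item Varying $p_1,p_2$ inside the fixed bounded window $(\delta^{-1/k},c\,\delta^{-1/k}]$ yields only finitely many $M$. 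It is the factor $q$ (or prime powers, as in the paper) that actually delivers infinitely many.
\item The bound $p_i^{-k}\ge c^{-k}\delta$ is non-strict. However, $2c^{-k}\delta>\delta$ still makes the final inequality strict.
\end{itemize}
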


\begin{proof}[Proof of Proposition \ref{Prop: Density}]
    For $\delta$ sufficiently small, by the hypothesis we can identify infinitely many distinct primes $\{ p_j \}_{j \geq 0}$ satisfying $2^{\frac{j}{k}}/\delta^{1/k} \leq p_j \leq 2^{\frac{j+1}{k}} / \delta^{1/k}$, alternatively
    \begin{align} \label{Eqn: pj Ineq}
        1 + \dfrac{\delta}{2^{j+1}} \leq 1 + \dfrac{1}{p_j^k} \leq 1 + \dfrac{\delta}{2^j}.
    \end{align}
    Now, consider the sequence of real numbers
    \begin{align*}
        A_N := \dfrac{\sigma_k(p_1 \cdots p_N)}{(p_1\cdots p_N)^k} = \prod_{j=0}^N \lp 1 + \dfrac{1}{p_j^k} \rp;
    \end{align*}
    by \eqref{Eqn: pj Ineq},
    \begin{align*}
        \prod_{j=1}^{N+1} \lp 1 + \dfrac{\delta}{2^j} \rp \leq A_N \leq \prod_{j=0}^N \lp 1 + \dfrac{\delta}{2^j} \rp.
    \end{align*}
    Now, if we formally take $N \to \infty$, we obtain
    \begin{align*}
        \Log\lp \prod_{j=1}^\infty \lp 1 + \dfrac{\delta}{2^j} \rp \rp = \sum_{j \geq 1} \Log\lp 1 + \dfrac{\delta}{2^j} \rp \geq \sum_{j \geq 1} \lp \dfrac{\delta}{2^j} - \dfrac{\delta^2}{2^{2j+1}} \rp = \delta - \dfrac{\delta^2}{6},
    \end{align*}
    from which we obtain
    \begin{align*}
        \lim_{N \to \infty} A_N \geq e^{\delta - \delta^2/6}.
    \end{align*}
    From a Taylor expansion $e^{\delta - \delta^2/6} = 1 + \delta + \frac 13 \delta^2 + O(\delta^3)$, we then deduce that for $\delta$ sufficiently small, there is some minimal choice of $N$ satisfying $A_N > 1+\delta$; we fix this value of $N$.
    
    Now, for positive integers $m_0, m_1, \dots, m_N \geq 1$, we consider the set of products
    \begin{align*}
        A_N(\underline{m}) := \prod_{j=0}^N \left[ 1 + p_j^{-k} + p_j^{-2k} + \dots + p_j^{-m_j k} \right]
    \end{align*}
    where $\underline{m} = (m_0,m_1,\dots,m_N).$ We will then show that there are infinitely many possible $\underline{m}$ such that $1+\delta < A_N(\underline{m}) < 1+\varepsilon$. Making use of the limitation $p_j \geq 2^{\frac{j}{k}} / \delta^{1/k}$, we see that
    \begin{align*}
        \prod_{j=0}^N \left[ 1 + p_j^{-k} + p_j^{-2k} + \dots \right] \leq \prod_{j=0}^N \dfrac{1}{1 - \frac{\delta}{2^{j}}} =: P_N(\delta).
    \end{align*}
    For our choice of $N$ and $\delta > 0$, we have
    \begin{align} \label{Eqn: Pinf Ineq}
        1+\delta < A_N(\underline{m}) < P_N(\delta) < P_\infty(\delta) := 1 + \sum_{m \geq 1} \dfrac{2^{m(m+1)/2}}{(2-1)(4-1) \cdots (2^m-1)} \delta^m.
    \end{align}
   By computing the coefficients of $\delta^m$ for $m=1,2$ in the right hand side of \eqref{Eqn: Pinf Ineq}, one can verify
    \begin{align*}
       P_\infty(\delta) < 1 + 2\delta + 3\delta^2 < 1 + \varepsilon
    \end{align*}
    since $\delta$ is chosen to be sufficiently small. Note that $A_N(\underline{m}) \to P_N(\delta)$ as $m_j \to \infty$ for each $j$ simultaneously. Then, using the fact that $1+\delta < P_N(\delta) < 1+\varepsilon$ for our choice of $N$ and $\delta$, we have $1+\delta < A_N(\underline{m}) < 1+\varepsilon$ when $m_j \gg 0$ for each $j$, in particular, there are infinitely many permissible choices for the $m_j$.
    The result then follows since
    \begin{align*}
        A_N(\underline{m}) = \dfrac{\sigma_k\lp p_0^{m_0} p_1^{m_1} \cdots p_N^{m_N} \rp}{\lp p_0^{m_0} p_1^{m_1} \cdots p_N^{m_N} \rp^k}.
    \end{align*}
\end{proof}

We then prove the first part of Theorem \ref{Thm: Primes in intervals} as Corollary \ref{Cor: Intervals Part 1}

\begin{corollary} \label{Cor: Intervals Part 1}
    Assume that $k>1$ is an odd integer and that for $c>1$ and $n\gg_c0$, every interval $(n,cn]$ contains at least one prime. Then for $\delta>0$ sufficiently small and $\varepsilon > 2\delta + 3\delta^2$, there are infinitely many values in the set $\{ c_{k,\ell}(N)/N^{k+\ell} \}_{N,\ell}$ that intersect the interval $(\delta,\varepsilon)$.
\end{corollary}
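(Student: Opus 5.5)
The plan is to derive Corollary \ref{Cor: Intervals Part 1} from Proposition \ref{Prop: Density} together with the identity \eqref{Eqn: Norm ckl}. The idea is to take the integers $M$ produced by Proposition \ref{Prop: Density} and send $\ell\to\infty$ while keeping $k$ and $M$ fixed. In that limit the second term of \eqref{Eqn: Norm ckl} should converge to $1$, so that $c_{k,\ell}(M)/M^{k+\ell}$ approaches $\sigma_k(M)/M^k - 1$, which lies in $(\delta,\varepsilon)$.

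\textbf{Step 1.} Fix $k$, $\delta$ and $\varepsilon$ as in the statement. Because the inequalities in Proposition \ref{Prop: Density} are strict, I would first pick auxiliary parameters $\delta<\delta'$ and $\varepsilon'<\varepsilon$ that still satisfy $\varepsilon'>2\delta'+3\delta'^2$, with $\delta'$ small enough for Proposition \ref{Prop: Density} to apply. The proposition then supplies infinitely many integers $M\gg0$ with
\[
1+\delta'<\sigma_k(M)/M^k<1+\varepsilon'.
\]
The margin between $\delta$ and $\delta'$, and between $\varepsilon'$ and $\varepsilon$, is what will absorb the error terms below.

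\textbf{Step 2.} Fix one such $M$ and bound the terms of \eqref{Eqn: Norm ckl}. For the first term, $(1+M^{-\ell})\sigma_k(M)/M^k$ decreases to $\sigma_k(M)/M^k$ as $\ell\to\infty$. For the second term, write
\[
\sigma_\ell(M)/M^\ell=\sum_{d\mid M} d^{-\ell},
\]
whose value lies between $1$ and $1+\sum_{d\ge2} d^{-\ell}$, and the upper bound tends to $1$ as $\ell\to\infty$. The extra factor $(1+M^{-k})$ is at most $1+2^{-k}$ and is fixed in $\ell$. This factor is the main obstacle: it does not vanish as $\ell$ grows, so the second term tends to $1+M^{-k}$ rather than to $1$. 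To handle it I would use the freedom to choose $M$ large, since Proposition \ref{Prop: Density} gives infinitely many $M\gg0$. Choosing $M$ with $M^{-k}<\delta'-\delta$ makes the factor harmless. Then $c_{k,\ell}(M)/M^{k+\ell}$ tends to $\sigma_k(M)/M^k-1-M^{-k}$, and this limit lies strictly inside the interval $(\delta'-M^{-k},\varepsilon')$, which is contained in $(\delta,\varepsilon)$.

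\textbf{Step 3.} Because the limit lies strictly inside $(\delta,\varepsilon)$, every sufficiently large odd $\ell>k$ gives a value $c_{k,\ell}(M)/M^{k+\ell}$ in $(\delta,\varepsilon)$. Doing this for infinitely many distinct $M$ produces infinitely many pairs $(N,\ell)=(M,\ell)$, which proves the corollary. If distinct values rather than distinct pairs are required, there are two routes. The first is that for fixed $M$ the values are eventually strictly monotone or at least nonconstant in $\ell$. The second, which I expect to work more easily, is that the limits $\sigma_k(M)/M^k-1-M^{-k}$ take infinitely many distinct values across the $M$ from Proposition \ref{Prop: Density}. The latter holds because those $M$ are built from arbitrarily large exponents $m_j$, so the limits $A_N(\underline m)$ run through infinitely many distinct values as $\underline m$ varies.
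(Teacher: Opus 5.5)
Your proposal is correct and follows the paper's own argument. You take $N$ from Proposition \ref{Prop: Density}, send $\ell\to\infty$ so that $\sigma_\ell(N)/N^\ell\to 1$, and absorb the $O(N^{-k})$ discrepancy in \eqref{Eqn: Norm ckl} by taking $N$ large; the paper does the same, bounding that discrepancy uniformly in $\ell$ by $2\zeta(k)/N^k$. Your auxiliary margin $\delta<\delta'$, $\varepsilon'<\varepsilon$ is a useful addition: the paper passes from the main term lying in $(\delta,\varepsilon)$ to $c_{k,\ell}(N)/N^{k+\ell}$ lying in $(\delta,\varepsilon)$ without securing such a margin, and your version makes that step rigorous.
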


\begin{proof}
    Let $\delta>0$ and $\varepsilon > 2\delta + 3\delta^2$ be real numbers, with $\delta$ sufficiently small according to the requirements of Proposition \ref{Prop: Density}. It suffices to show that $\frac{c_{k,\ell}(N)}{N^{k+\ell}} \in (\delta,\varepsilon)$ for some $N$. Assuming that every interval $(n,2^{1/k}n]$ for $n\gg_k0$ contains at least one prime, Proposition \ref{Prop: Density} implies that there are infinitely many $N$ satisfying
    \begin{align*}
        1+\delta < \dfrac{\sigma_k(N)}{N^k} < 1+\varepsilon.
    \end{align*}
    Observe that $\sigma_\ell(N)/N^\ell \to 1$ as $\ell\to\infty$, so for arbitrarily large values of $N$ we may choose $\ell > k$ satisfying
    \begin{align*}
        \delta < \dfrac{\sigma_k(N)}{N^k} - \dfrac{\sigma_\ell(N)}{N^\ell} < \varepsilon.
    \end{align*}
    Since $N$ may be taken as large as desired, and since
    \begin{align*}
        \left| \dfrac{\sigma_k(N)}{N^k} - \dfrac{\sigma_\ell(N)}{N^\ell} - \dfrac{c_{k,\ell}(N)}{N^{k+\ell}} \right| \leq \dfrac{\sigma_k(N)}{N^{k+\ell}} + \dfrac{\sigma_\ell(N)}{N^{k+\ell}} < \dfrac{2\zeta(k)}{N^k},
    \end{align*}
    we have the convergence
    \begin{align*}
        \dfrac{\sigma_k(N)}{N^k} - \dfrac{\sigma_\ell(N)}{N^\ell} \to \dfrac{c_{k,\ell}(N)}{N^{k+\ell}}
    \end{align*}
    as $N \to \infty$ with error independent of $\ell$. In particular, with $k$ fixed, we can choose infinitely many $N \gg 0$ so that $\delta < c_{k,\ell}(N)/N^{k+\ell} < \varepsilon$, which completes the proof.
\end{proof}

We now prove the second part of Theorem \ref{Thm: Primes in intervals} as Corollary \ref{Cor: Intervals Part 2}.

\begin{corollary} \label{Cor: Intervals Part 2}
    Assume that $k>1$ is an odd integer and that the sets
    \begin{align*}
        S_k = \bigg\{ \dfrac{c_{k,\ell}(N)}{N^{k+\ell}} : \ell>k \textrm{ odd, } \omega(N) \leq C(k) \bigg\},
    \end{align*}
    for some function $C(k)$ satisfying $C(k)^{1/k} \to 1$ as $k \to \infty$, intersect all intervals $(\delta,\varepsilon)$ with $\varepsilon > \delta + \delta^{(k+1)/k}$ and $\delta$ small.
    Then every interval $(n,cn]$ for $c>1$ and $n \gg_k 0$ contains at least one prime.
\end{corollary}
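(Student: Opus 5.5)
Fix $c>1$; we show that for $n\gg_{k} 0$ the interval $(n,cn]$ contains a prime. The idea is to run Theorem \ref{Thm: Quick Factorization Range} in reverse. Suppose $\theta:=c_{k,\ell}(N)/N^{k+\ell}\in(\delta,\varepsilon)$ with $\omega(N)\le C(k)$. Then $N$ has a prime factor $p$ in a window
\begin{align*}
\left[(\tfrac43\theta)^{-1/k},\ \big((\tfrac43\theta)^{-1/k}+1\big)(\tfrac83 C(k))^{1/k}\right],
\end{align*}
unless $N$ has a prime factor below $\log N$. We choose $\delta$ so that the left endpoint is essentially $n$, i.e.\ $\delta\approx \tfrac34 n^{-k}$, and use $\varepsilon$ only slightly larger than $\delta$. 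The window then has multiplicative width $(\delta/\varepsilon)^{1/k}(\tfrac83C(k))^{1/k}(1+o(1))$, and we want this to lie inside $(n,cn]$.

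\textbf{Step 1: choose $\delta$ and $\varepsilon$.} Given $n$ large, set $\delta=\tfrac34 (cn')^{-k}$ for a suitable $n'$ close to $n$, and $\varepsilon=\delta+2\delta^{(k+1)/k}$, which is admissible. The hypothesis then gives $N$ and $\ell$ with $\theta\in(\delta,\varepsilon)$ and $\omega(N)\le C(k)$. Since $\varepsilon/\delta=1+O(\delta^{1/k})=1+O(1/n)$, we get $(\tfrac43\theta)^{-1/k}\in\big((\tfrac43\varepsilon)^{-1/k},(\tfrac43\delta)^{-1/k}\big)$, an interval of multiplicative width $1+O(n^{-1})$. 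The prime $p$ therefore lies in $[x,\,x(1+O(1/n))(\tfrac83C(k))^{1/k}]$ with $x\approx$ the chosen scale.

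\textbf{Step 2: rule out the small-factor branch.} We must exclude the case that $N$ has a prime factor $p<\log N$. The lower bound of Lemma \ref{Lem: Coefficient bounds} gives $\theta\ge p^{-k}(1-p^{k-\ell})\ge\tfrac34 p^{-k}$, where $p$ is now the smallest prime factor of $N$. Since $\theta<\varepsilon\approx\tfrac34 x^{-k}$, this forces $p\gtrsim x$. So the smallest prime factor of $N$ is already at least about $n$, and a prime factor below $\log N$ cannot occur at all, provided $\log N\le p$. If instead $\log N> p$, the proof of Theorem \ref{Thm: Quick Factorization Range} should still go through by using the bound $\omega(N)\le C(k)$ directly in place of $\omega(N)<p$. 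So we re-run that proof with $C(k)$ replacing $\omega(N)$ and never invoke the $\log N$ branch.

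\textbf{Step 3 (main obstacle): fit the window into $(n,cn]$.} The factor $(\tfrac83 C(k))^{1/k}$ does not depend on $n$, and it exceeds $c$ unless $k$ is large. The hypothesis $C(k)^{1/k}\to1$ is meant to handle this, but $(8/3)^{1/k}\to1$ also only as $k\to\infty$. So either the statement must be read as allowing $k$ to be chosen large depending on $c$ (and the hypothesis is for every odd $k$), or we sharpen the constants. The plan is to take $k$ large enough that $(\tfrac83C(k))^{1/k}<\sqrt c$, then place $x=n\cdot c^{1/4}$ or similar so that $[x,x\sqrt c(1+O(1/n))]\subset(n,cn]$ for $n\gg_k0$. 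If $k$ is fixed instead, we would try to improve the constants $\tfrac34$ and $\tfrac83$ toward $1$ and $\omega(N)$ using $p\approx n$ large: indeed $p^{-k}-p^{-\ell}=p^{-k}(1-o(1))$, and the upper sum is $\le \omega(N)p^{-k}(1+o(1))$. This would still leave the factor $C(k)^{1/k}$, so some dependence of $k$ on $c$ seems unavoidable, and that is where I expect the real difficulty to lie.
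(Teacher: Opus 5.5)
Your proposal is correct and follows essentially the paper's argument. The paper takes $\delta=\frac{3}{4M^k}$ and $\varepsilon=\frac{3}{4(M-1)^k}$, checking that $\varepsilon>\delta+\delta^{(k+1)/k}$, to pin the normalized coefficient. It then applies the window of Theorem \ref{Thm: Quick Factorization Range} with $\omega(N)\le C(k)$ to get a prime in $\bigl(M,(\tfrac83 C(k))^{1/k}M\bigr)$, and concludes from $C(k)^{1/k}\to 1$, i.e.\ by taking $k$ large in terms of $c$, exactly as in your Step 3. Your Step 2 supplies a justification the paper's proof omits when it drops the $p<\log N$ alternative: you use the lower bound to show the smallest prime factor is already of size about $M$, and substitute $\omega(N)\le C(k)$ for $\omega(N)<\log N\le p$.
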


\begin{proof}
    Recall from \eqref{Eqn: Wide interval eqn} that either $p < \log(N)$ or
    \begin{align*}
        \dfrac{3}{4p^k} < \dfrac{c_{k,\ell}(N)}{N^{\ell+k}} < \dfrac{2 \omega(N)}{p^k}.
    \end{align*}
    where $p$ is the minimal prime dividing $N$. Following the same procedure as implemented in the proof of Theorem \ref{Thm: Quick Factorization Range} immediately following \eqref{Eqn: Wide interval eqn}, if we are able to identify $M$ as the unique integer such that
    \begin{align} \label{Eqn: Interval}
        \dfrac{3}{4M^k} < \dfrac{c_{k,\ell}(N)}{N^{\ell+k}} < \dfrac{3}{4\lp M - 1 \rp^k},
    \end{align}
    we obtain the existence of a prime $p|N$ satisfying
    \begin{align} \label{Eqn: Prime}
        M < p < \lp \dfrac{8\omega(N)}{3} \rp^{1/k} M.
    \end{align}
    One can verify that if $\delta = \frac{3}{4M^k}$ and $\varepsilon = \frac{3}{4 (M-1)^k}$ for $M\in\mathbb{Z}^{>0}$, then $\delta + \delta^{(k+1)/k} < \varepsilon$. Observe that by the hypothesis, for $M\gg0$ we can find some $N\gg0$ satisfying \eqref{Eqn: Interval} and, in turn, some prime $p$ satisfying \eqref{Eqn: Prime}. Since $C(k)^{1/k} \to 1$ as $k \to \infty$, this completes the proof.
\end{proof}

\section{Primes in Arithmetic Progressions} \label{Sec: Progressions}

In this section, we will prove analogs of Theorems \ref{Thm: Quick Factorization Range} and \ref{Thm: Primes in intervals} for primes in arithmetic progressions. Furthermore, we briefly discuss how our techniques may be applied to the problem of bounding the first prime in an arithmetic progression.

\subsection{Preliminary Bounds}

To begin, we need an analogy of Lemma \ref{Lem: Coefficient bounds}.

\begin{lemma} \label{Lem: Coefficient bounds progressions}
    Let $1 \leq a \leq b$ be coprime integers and let $n \geq 1$. Then we have $c_{k,\ell}^{a,b}(n) = 0$ if and only if $n \equiv a \pmod{b}$ is prime, and for all $n$ we have
    \begin{align*}
        n^{\ell+k} \lp \dfrac{1}{p^k} - \dfrac{1}{p^\ell} \rp < c_{k,\ell}^{a,b}(n) < n^{\ell+k} \sum_{\substack{d|n \\ d \in \mathcal N_{a,b}}} \dfrac{1}{d^k},
    \end{align*}
    where $p$ is the smallest prime satisfying $p \equiv a \mod{b}$ and $p|n$.
\end{lemma}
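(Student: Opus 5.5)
The plan is to reduce everything to ordinary divisor sums of a single auxiliary integer and then rerun the argument of Lemma \ref{Lem: Coefficient bounds}. For $n\geq 1$, let $m=m_{a,b}(n)$ be the largest divisor of $n$ lying in $\mathcal N_{a,b}$, that is, the product of the full prime powers $p^e\| n$ with $p\equiv a \pmod b$. A divisor $d|n$ lies in $\mathcal N_{a,b}$ exactly when $d|m$. Hence $\sigma_j^{a,b}(n)=\sigma_j(m)$, and
\begin{align*}
c_{k,\ell}^{a,b}(n)=\sum_{d|m} g_n(d), \qquad g_n(d):=(1+n^\ell)d^k-(1+n^k)d^\ell .
\end{align*}
The elementary fact driving everything is that $g_n(n)=0$, while $g_n(d)>0$ for $1\leq d<n$. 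This holds because $d^{\ell-k}\leq (n-1)^{\ell-k}<(1+n^\ell)/(1+n^k)$ for $n\geq 2$. Throughout, $\ell>k\geq 1$ are odd, as in the rest of the paper.

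\textbf{Vanishing.} If $m=n$ and $n=p$ is prime, then $g_n(1)+g_n(p)=0$ by direct expansion. In every other case with $n\geq 2$, the sum contains $d=1$ and possibly other divisors $d<n$, all with $g_n(d)>0$, so $c^{a,b}_{k,\ell}(n)>0$. The case $n=1$ gives $c^{a,b}_{k,\ell}(1)=0$ trivially.

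I should flag a tension with the statement here. When $m=1$ (no prime factor $\equiv a$), the value is $n^\ell-n^k>0$, not $0$. This contradicts the remark following Theorem \ref{Thm: Quick Factorization Range Progressions}. I would therefore state the characterization as ``$c^{a,b}_{k,\ell}(n)=0$ iff $n=1$ or $n$ is a prime $\equiv a$,'' and I would only assert the two-sided bound when $p$ is defined.

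\textbf{Lower bound.} For $n=m\in\mathcal N_{a,b}$ composite, I copy the proof of Lemma \ref{Lem: Coefficient bounds} verbatim, since all divisors of $n$ are admissible. Keeping only $d=p$ and $d=n/p$ (or only $d=p$ if $n=p^2$), and discarding the nonnegative terms $g_n(d)$ for $1<d<n$, gives $c^{a,b}_{k,\ell}(n)\geq n^{\ell+k}(p^{-k}-p^{-\ell})+p^k-p^\ell+\cdots$. Now add back the discarded $g_n(1)=n^\ell-n^k$, which dominates $p^\ell-p^k$ once $n>p$. This yields the strict inequality $n^{\ell+k}(p^{-k}-p^{-\ell})<c^{a,b}_{k,\ell}(n)$.

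This is the step I expect to be the real obstacle. If $m<n$, then $n/p$ need not be admissible. For example, take $n=pq$ with $q\not\equiv a$. Then $c^{a,b}_{k,\ell}(n)=g_n(1)+g_n(p)\approx n^\ell p^k$, which is far smaller than $n^{\ell+k}p^{-k}$. So the lower bound cannot hold for general $n$, and the best available is $c^{a,b}_{k,\ell}(n)\geq g_n(1)+g_n(p)$. I would first try to restrict the lemma, and its downstream use in Theorem \ref{Thm: Quick Factorization Range Progressions}, to $n\in\mathcal N_{a,b}$. Alternatively, I would replace $n$ by $m$ in the normalization, i.e.\ bound in terms of $m^{\ell+k}$, and check that the later proofs survive.

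\textbf{Upper bound.} Drop the negative part: $c^{a,b}_{k,\ell}(n)<(1+n^\ell)\sigma_k(m)$. Flipping $d\mapsto m/d$ gives $\sigma_k(m)=m^k\sum_{d|m}d^{-k}$. If $m=n$, the exact identity
\begin{align*}
c^{a,b}_{k,\ell}(n)=n^{\ell+k}\sum_{d|n}d^{-k}+n^k\sum_{d|n}d^{-k}-(1+n^k)n^\ell\sum_{d|n}d^{-\ell}
\end{align*}
shows the last two terms total a negative quantity, because $n^\ell\sum_{d|n}d^{-\ell}\geq n^\ell>n^k\zeta(k)$-size terms. If $m<n$, then $m\leq n/2$, so $(1+n^\ell)m^k\leq n^{\ell+k}$ with room to spare. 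In both cases $c^{a,b}_{k,\ell}(n)<n^{\ell+k}\sum_{d|n,\,d\in\mathcal N_{a,b}}d^{-k}$.
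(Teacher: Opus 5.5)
\textbf{Gap: the identity $g_n(n)=0$ is false.} In fact
\[
g_n(n)=(1+n^\ell)n^k-(1+n^k)n^\ell=n^k-n^\ell=-g_n(1).
\]
This cancellation is exactly why \eqref{AsympLemEq} runs only over $1<d<n$. It is also what your ``direct expansion'' $g_p(1)+g_p(p)=0$ actually shows, and that identity is incompatible with $g_p(p)=0<g_p(1)$.

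The vanishing criterion survives if, when $m=n$, you pair $d=1$ with $d=n$. Positivity for composite $n\in\mathcal N_{a,b}$ then comes from the divisors $1<d<n$. When $m<n$, the index $d=n$ never occurs.

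The lower-bound step ``add back the discarded $g_n(1)$'' is not legitimate, however: for $n\in\mathcal N_{a,b}$ that term is already cancelled by $g_n(n)$. For composite $n\neq p^2$ you can still get strictness from the $d=n/p$ term, since $(p^{\ell+k}-1)\left((n/p)^\ell-(n/p)^k\right)>p^\ell-p^k$ when $n/p>p$. But for $n=p^2$ with $p\equiv a\pmod b$ one has exactly
\[
c^{a,b}_{k,\ell}(p^2)=g_{p^2}(p)=p^{2\ell+k}-p^{\ell+2k}+p^k-p^\ell<p^{2\ell+k}-p^{\ell+2k}=n^{\ell+k}\left(p^{-k}-p^{-\ell}\right).
\]
For example, $a=b=1$ (so $c^{1,1}_{k,\ell}=c_{k,\ell}$), $n=4$, $(k,\ell)=(3,5)$ gives $6120<6144$. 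So the strict lower bound is false even after your restriction to $n\in\mathcal N_{a,b}$. It also fails trivially at primes $n\equiv a\pmod b$, where $c=0$. What is true on $\mathcal N_{a,b}$ is the bound of Lemma \ref{Lem: Coefficient bounds} with its correction term $n^k-n^\ell$, or the strict bound for composite $n\neq p^2$.

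\textbf{What you got right.} Your example $n=pq$ with $q\not\equiv a\pmod b$ does refute the stated lower bound, provided you take $q>p$. For instance, $(a,b)=(1,4)$, $n=35$, $(k,\ell)=(3,5)$ gives $c=6483726000$ against a claimed lower bound of $17294403000$. This is precisely where the paper's argument breaks, in two places:
\begin{itemize}
\item it drops the summands $(n/d)^k-(n/d)^\ell\le 0$, which can only yield an upper bound;
\item it then uses the term $d=p$ in a sum indexed by $n/d\in\mathcal N_{a,b}$, which needs $n/p\in\mathcal N_{a,b}$.
\end{itemize}
Your $n=1$ observation is correct, as is the conflict you noted with the remark after Theorem \ref{Thm: Quick Factorization Range Progressions}.

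Your upper bound is fine and is essentially the paper's: discard the negative part, then flip $d\mapsto n/d$. The paper's last step, $n^\ell\sigma^{a,b}_k(n)=n^{\ell+k}\sum_{d|n,\,d\in\mathcal N_{a,b}}d^{-k}$, is an equality only for $n\in\mathcal N_{a,b}$. Otherwise it is an inequality in the right direction, consistent with your case $m<n$.
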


\begin{proof}
    Observe firstly that $c_{k,\ell}^{a,b}(p) = 0$ if $p \equiv a \pmod{b}$ is prime; so assume now that $n$ is not such a $p$.
    If the only divisor of $n$ in $\mathcal N_{a,b}$ is 1, then $c_{k,\ell}^{a,b}(n) = (1+n^\ell) - (1+n^k) = n^\ell - n^k > 0$, and so we assume now that $n$ possesses nontrivial divisors in $\mathcal N_{a,b}$; letting $d$ be such a divisor,
    \begin{align} \label{Eqn: Starting Lower Bound}
        \lp 1 + n^\ell \rp d^k - \lp 1 + n^k \rp d^\ell &= d^k \left[ n^\ell + 1 - d^{\ell-k} - n^k d^{\ell-k} \right] \geq d^k \left[ n^\ell + 1 - \dfrac{n^{\ell-k}}{2^{\ell-k}} - \dfrac{n^{\ell}}{2^{\ell-k}} \right]
        \notag \\ &= d^k \left[ n^\ell \lp 1 - \dfrac{1}{2^{\ell-k}} - \dfrac{1}{n^k 2^{\ell-k}} \rp + 1 \right] > 0.
    \end{align}
    This establishes the positivity and vanishing criteria claimed.

    Now, let $n \geq 1$ be composite and assume $n$ has nontrivial divisors in $\mathcal N_{a,b}$. Then we have
    \begin{align*}
        c_{k,\ell}^{a,b}(n) &= \sum_{\substack{d|n \\ d \in \mathcal N_{a,b}}} \lp 1 + n^\ell \rp d^k - \lp 1 + n^k \rp d^\ell 
        = \sum_{\substack{d|n \\ \substack{\frac nd \in \mathcal N_{a,b}}}} \lp 1 + n^\ell \rp \dfrac{n^k}{d^k} - \lp 1 + n^k \rp \dfrac{n^\ell}{d^\ell}
        \\ &= \sum_{\substack{d|n \\ \frac nd \in \mathcal N_{a,b}}} n^{\ell+k} \lp \dfrac{1}{d^k} - \dfrac{1}{d^\ell} \rp + \lp \dfrac{n^k}{d^k} - \dfrac{n^\ell}{d^\ell} \rp
        < \sum_{\substack{d|n \\ \frac nd \in \mathcal N_{a,b}}} \dfrac{n^{\ell+k}}{d^k} = n^\ell \sum_{\substack{d|n \\ d \in \mathcal N_{a,b}}} d^k = n^{\ell+k} \sum_{\substack{d|n \\ d \in \mathcal N_{a,b}}} \dfrac{1}{d^k}.
    \end{align*}
    We also obtain, for $p$ the smallest prime congruent to $a$ modulo $b$ that divides $n$,
    \begin{align*}
        c_{k,\ell}^{a,b}(n) = \sum_{\substack{d|n \\ \frac nd \in \mathcal N_{a,b}}} n^{\ell+k} \lp \dfrac{1}{d^k} - \dfrac{1}{d^\ell} \rp + \lp \dfrac{n^k}{d^k} - \dfrac{n^\ell}{d^\ell} \rp > n^{\ell+k} \sum_{\substack{d|n \\ \frac nd \in \mathcal N_{a,b}}} \lp \dfrac{1}{d^k} - \dfrac{1}{d^\ell} \rp > n^{\ell+k} \lp \dfrac{1}{p^k} - \dfrac{1}{p^\ell} \rp.
    \end{align*}
    This completes the proof.
\end{proof}

\subsection{Proof of Theorem \ref{Thm: Quick Factorization Range Progressions}}

Let $N \geq 3$ be composite, $\ell > k \geq 3$ be odd integers and let $a,b$ be coprime integers with $2 \leq a \leq b+1$. Suppose $N$ has minimal prime divisor $p \equiv a \pmod{b}$ and that $p \geq \log(N)$. Consider  $\theta_{k,\ell}^{a,b}(N) := \frac{c_{k,\ell}^{a,b}(N)}{N^{\ell+k}}$.

Then by Lemma \ref{Lem: Coefficient bounds}, we have
\begin{align*}
    \dfrac{1}{p^k} - \dfrac{1}{p^\ell} < \theta_{k,\ell}^{a,b}(N) < \sum_{\substack{d|N \\ d \in \mathcal N_{a,b} \\ p \leq d \leq N/p}} \dfrac{1}{d^k}.
\end{align*}
It is easy to verify by the same method as in the proof of Theorem \ref{Thm: Quick Factorization Range} that $\theta_{k,\ell}^{a,b}(N) < 1$ and that for $N \gg 0$ we have
\begin{align*}
    \sum_{\substack{d|N\\ d \in \mathcal N_{a,b}  \\ p \leq d \leq N/p}} \dfrac{1}{d^k} < \dfrac{2\omega_{a,b}(N)}{p^k},
\end{align*}
where in the last step we have used the inequality $\omega_{a,b}(N) < \log(N) \leq p$ and $e^x-1 < 2x$ for $x<1$. Since we also have for $\ell > k \geq 3$ odd that $p^{-k} - p^{-\ell} \geq p^{-k} \lp 1 - p^{-2} \rp \geq \frac{a^2-1}{a^2} p^{-k}$, we have for $N \gg 0$ that
\begin{align} \label{Eqn: Wide interval eqn progs}
    \dfrac{a^2-1}{a^2 p^k} < \theta_{k,\ell}^{a,b}(N) < \dfrac{2 \omega_{a,b}(N)}{p^k}.
\end{align}
Find the unique minimal integer $x_{N,a,b}$ such that
\begin{align*}
    \dfrac{a^2-1}{a^2 x_{N,a,b}^k} < \theta_{k,\ell}^{a,b}(N),
\end{align*}
so that $p \geq x_{N,a,b}$.  Now, if $p > x_{N,a,b} + \delta_{N,a,b}$ for some $\delta_{N,a,b}>0$ and simultaneously
\begin{align*}
    \dfrac{a^2-1}{a^2 x_{N,a,b}^k} \geq \dfrac{2 \omega_{a,b}(N)}{\lp x_{N,a,b} + \delta_{N,a,b} \rp^k}
\end{align*}
for $N \gg 0$, then we have contradicted \eqref{Eqn: Wide interval eqn progs}, thus if we choose such a $\delta_{N,a,b}$, then we must have $x_{N,a,b} \leq p < x_{N,a,b} + \delta_{N,a,b}$ for $N \gg 0$. We find that any choice
\begin{align*}
    \delta_{N,a,b} \geq \lp \lp \dfrac{2a^2}{a^2-1} \omega_{a,b}(N) \rp^{1/k} - 1 \rp x_{N,a,b}
\end{align*}
creates the above contradiction, and therefore $N$ must have a prime divisor in the interval
\begin{align*}
    \left[ x_{N,a,b}, \lp \dfrac{2a^2}{a^2-1} \omega_{a,b}(N) \rp^{1/k} x_{N,a,b} \right].
\end{align*}
Finally, from the inequalities $\frac{a^2-1}{a^2} x^{-k}_{N,a,b} < c_{k,\ell}^{a,b}(N)/N^{k+\ell} \leq \frac{a^2-1}{a^2} \lp x_{N,a,b} - 1 \rp^{-k}$, which hold by the definition of $x_{N,a,b}$, we have
\begin{align*}
    \lp \dfrac{a^2}{a^2-1} \dfrac{c_{k,\ell}^{a,b}(N)}{N^{k+\ell}} \rp^{-1/k} < x_{N,a,b} \leq \lp \dfrac{a^2}{a^2-1} \dfrac{c_{k,\ell}^{a,b}(N)}{N^{k+\ell}} \rp^{-1/k} + 1,
\end{align*}
which completes the proof.

\subsection{Proof of Theorem \ref{Thm: Primes in intervals Progressions}}

The proof of Theorem \ref{Thm: Primes in intervals Progressions} follows {\it mutatis mutandis} from the density proposition analogous to Proposition \ref{Prop: Density}.

\begin{proposition} \label{Prop: Density Progressions}
    Assume that for $c>1$ and $n\gg_c0$, the interval $(n,cn]$ contains at least one prime congruent to $a$ modulo $b$. Let $k>1$ be an odd integer and let $\delta > 0$ and $\varepsilon > 2\delta + 3\delta^2$ be positive real numbers. Then, if $\delta$ is sufficiently small as a function of $k$, there are infinitely many integers $M\gg0$ such that
    \begin{align*}
        1+\delta < \dfrac{\sigma^{a,b}_k(M)}{M^k} < 1 + \varepsilon.
    \end{align*}
\end{proposition}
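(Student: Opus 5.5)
The plan is to transcribe the proof of Proposition \ref{Prop: Density}, using only primes $p\equiv a \pmod b$ throughout. The point is that $\sigma_k^{a,b}$ is multiplicative. Moreover, if $M$ is a product of prime powers $p_j^{m_j}$ with every $p_j\equiv a\pmod b$, then every divisor of $M$ lies in $\mathcal N_{a,b}$. Hence $\sigma_k^{a,b}(M)=\sigma_k(M)$ for such $M$, and
\begin{align*}
\dfrac{\sigma_k^{a,b}(M)}{M^k}=\prod_{j=0}^N\left[1+p_j^{-k}+\dots+p_j^{-m_jk}\right]=A_N(\underline m).
\end{align*}
So the whole problem reduces to producing suitable primes in the progression. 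The rest of the argument is word for word the same as before.

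\textbf{Step 1 (choosing the primes).} Fix $\delta$ small. Apply the hypothesis with $c=2^{1/k}$ to the intervals $\bigl(2^{j/k}\delta^{-1/k},\,2^{(j+1)/k}\delta^{-1/k}\bigr]$. This is legitimate once $\delta^{-1/k}$ exceeds the threshold $n\gg_c 0$, which is exactly where ``$\delta$ sufficiently small as a function of $k$'' enters. It yields distinct primes $p_j\equiv a\pmod b$, $j\ge 0$, satisfying \eqref{Eqn: pj Ineq}. The intervals are disjoint, so the $p_j$ are distinct.

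\textbf{Step 2 (the lower target).} Form $A_N=\prod_{j=0}^N(1+p_j^{-k})$. As in the proof of Proposition \ref{Prop: Density}, the bound $\log(1+x)\ge x-x^2/2$ gives
\begin{align*}
\lim_{N\to\infty} A_N\ge e^{\delta-\delta^2/6}>1+\delta
\end{align*}
for small $\delta$. Fix the minimal $N$ with $A_N>1+\delta$.

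\textbf{Step 3 (the upper target).} For all exponent vectors $\underline m$ with $m_j\ge1$, we have $A_N\le A_N(\underline m)<P_N(\delta)\le P_\infty(\delta)$. As computed in \eqref{Eqn: Pinf Ineq}, $P_\infty(\delta)<1+2\delta+3\delta^2<1+\varepsilon$ for small $\delta$.

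\textbf{Step 4 (infinitely many $M$).} Every choice of $\underline m$ with all $m_j\ge1$ gives an integer $M=\prod p_j^{m_j}$ satisfying
\begin{align*}
1+\delta<\dfrac{\sigma_k^{a,b}(M)}{M^k}<1+\varepsilon.
\end{align*}
There are infinitely many such $M$, and they can be taken arbitrarily large.

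The only genuinely new point, and the one to verify carefully, is the identity $\sigma^{a,b}_k(M)=\sigma_k(M)$ for $M$ built from primes in the class $a$. It holds because $\mathcal N_{a,b}$ is defined by a condition on the prime divisors alone, and all prime divisors of $M$ are $\equiv a\pmod b$. The standing assumption that some prime $\equiv a\pmod b$ exists is subsumed by the interval hypothesis. The remaining analytic estimates are identical to those already carried out, since they never depended on which primes were used.
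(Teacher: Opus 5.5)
Your proposal is correct and follows the paper's route. The paper proves this proposition by saying it follows \emph{mutatis mutandis} from the proof of Proposition \ref{Prop: Density}, and you carry out that transcription, including the one point the paper leaves implicit: $\sigma_k^{a,b}(M)=\sigma_k(M)$ whenever all prime factors of $M$ are $\equiv a \pmod b$. You also take the primes $p_j\equiv a\pmod b$ directly from the interval hypothesis with $c=2^{1/k}$, whereas the paper appeals to Theorem \ref{Thm: Primes in intervals Progressions}, whose first part itself rests on this proposition, so your justification is the cleaner one.
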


\begin{proof}[Proof of Proposition \ref{Prop: Density Progressions}]
    Since Theorem \ref{Thm: Primes in intervals Progressions} gives us the existence of infinitely many such primes under only the given assumption, the result follows {\it mutatis mutandis} from the proof of Proposition \ref{Prop: Density}.
\end{proof}

\subsection{Smallest prime in an arithmetic progression}

Our results on primes in arithmetic progressions rely, in an indispensable way, on the assumption that such a prime does in fact exist. Consequently, we leave a note on the possibility to establish the existence of such a prime by using these techniques directly. Note that typical ways of doing this, like Dirichlet's classic proof with $L$-functions, already prove in a single stroke the infinitude of primes.

While we do not claim to have such a proof, there are hints from our results covering all primes that may apply in this setting. In particular, the prime-detecting forms $f_{k,\ell}$ has a Fourier expansion approximated from below by
\begin{align*}
    \sum_{p \text{ prime}} \sum_{\substack{n \geq 1 \\ P_{\text{min}}(n) = p}}n^{k+\ell}p^{-k}q^n,
\end{align*}
where $P_{\text{min}}(n)$ denotes the minimal prime factor of $n$. The inner sums are very similar to the functions $L_{m,k,\ell}(q) := m^{-k} \sum_{n \geq 1} n^{k+\ell} q^n$, which are essentially polylogarithms, and so are suitable for asymptotic analysis. Now, if we consider
\begin{align*}
    f_{k,\ell}(q) - \sum_{m \geq 0} L_{a+mb,k,\ell}(q),
\end{align*}
the existence of some prime $p = a+mb$ should produce some cancellation in the main term asymptotics as $q \to 1$. Plausibly, such results if carefully collected may not only find that there must be some prime $p \equiv a \pmod{b}$, but even in principle might give bounds on when the first such prime might occur. We leave such speculation as a possible future topic of study.

\appendix
\section{Factorization in light of Theorem \ref{Thm: Quick Factorization Range}} \label{Sec: Appendix A}

Based on \cite{CraigIttersumOno}, Theorem \ref{Thm: Quick Factorization Range} provides a partition-theoretic algorithm for the factorization of $N$. This method calculates the coefficients of $f_{k,\ell}$ using MacMahon's functions after which we identify potential ranges of values in which the smallest prime dividing $N$ might lie. Before we discuss the computational complexity issues surrounding this algorithm, it must be noted that the calculation of coefficients of depth one MacMahon functions $M_{(a)}(n)$ for any $a \geq 0$ involves calculating every partition $(d,d,\dots,d)$ of $n$, which is the factorization problem which we claim to solve. Therefore, a proper partition-theoretic algorithm for factoring $N$ must avoid using any depth one MacMahon forms. These formulas can be found, for example, using simple linear algebra; one such example is given by
\begin{align*}
    H_8(q) - \frac{31}{120960} = \frac{8}{9}\mathcal{U}_{(1,1)} &- \frac{395}{324}\mathcal{U}_{(3,1)} - \frac{41}{36}\mathcal{U}_{(1,3)} + \frac{13}{54}\mathcal{U}_{(2,2)} - 2\mathcal{U}_{(1,1,1)} - \frac{151}{324}\mathcal{U}_{(5,1)}-\frac{1}{4}\mathcal{U}_{(1,5)} \\&+ \frac{70}{81}\mathcal{U}_{(4,2)} + \frac{175}{162}\mathcal{U}_{(2,4)} + \frac{20}{3}\mathcal{U}_{(1,1,3)} + \frac{20}{3}\mathcal{U}_{(1,3,1)} + \frac{20}{3}\mathcal{U}_{(3,1,1)}.
\end{align*}
Note that the coefficients of $H_k(q)$ are defined a priori from powers of $n$ times divisor sum functions, and from this point of view would appear to be depth one objects. However, with careful use of the Bachmann--K\"{u}hn quasi-shuffle relations for $q$-multiple zeta values, such identities can be derived for the vast majority of (if not all) prime-detecting quasimodular forms. Furthermore, Theorem \ref{Thm: Quick Factorization Range} generalizes readily to take other prime-detecting forms as inputs once the analog of Lemma \ref{Lem: Coefficient bounds} is established, which for $H_k$ is precisely Lemma \ref{Lem: H_k coefficient bounds}. We may therefore remain unconcerned about issues of circularity in the proposed factorization procedure.

Since Theorem \ref{Thm: Quick Factorization Range} outlines a particular interval of integers within which $N$ must have a prime factor, it is natural to investigate if this is a useful factorization algorithm. While the answer is essentially no due to the difficulty of enumerating partitions, there are nonetheless suggestive questions which should be explored arising from the fact that the intervals in Theorem \ref{Thm: Quick Factorization Range} don't fundamentally rely on the exact values of these coefficients, but on their orders of magnitude. Thus, we will explore the ways in which hypothetical approximations of $c_{k,\ell}(N)$ still inform the factorization of $N$.

Assume for the remainder of discussion that $N$ has no prime divisor less than $\log(N)$, which can be verified in polynomial time. We know from \eqref{Eqn: Wide interval eqn} that $\frac 34 p^{-k} < c_{k,\ell}(N)/N^{k+\ell}$, and from this we may deduce that
\begin{align*}
    p > \lp \dfrac{3 N^{k+\ell}}{4 c_{k,\ell}(N)} \rp^{1/k}.
\end{align*}
We can then choose a particular function $\delta_N>0$ so that we are guaranteed $p \geq N^\delta$, namely,
\begin{align*}
    \delta = \dfrac{1}{k \log(N)} \log\lp \dfrac{3 N^{k+\ell}}{4 c_{k,\ell}(N)} \rp.
\end{align*}
Thus, the value of $c_{k,\ell}(N)$ is directly suggestive of whether $N$ possesses only large prime divisors.

In order to permit some interesting flexibility and a possible computational speed-up to the algorithm, we might consider using approximations of $c_{k,\ell}(N)$ in place of its exact value. To this end, let $a_{k,\ell}(N)$ be such a hypothetical approximation\footnote{We do not take too much care over how close the approximation is, as our discussion here is primarily philosophical in nature.}, and subsequently we might consider a value
\begin{align*}
    \delta = \dfrac{1}{k \log(N)} \log\lp \dfrac{3 N^{k+\ell}}{4 a_{k,\ell}(N)} \rp.
\end{align*}
If $\delta > \frac 12$, this is aberrant and suggests that $N$ is probably semiprime with its two primes extremely close, and such integers can be factored very quickly using classical difference of squares algorithms. Apart from such aberrations in estimation, we know $0 < \delta \leq \frac 12$, and the size of $\delta$ directly suggests that the smallest prime dividing $N$ is of approximate order $N^\delta$. Then Theorem \ref{Thm: Primes in intervals} gives a nontrival reduction in the required search space for identifying $p$ by trial division.

We now consider the question of whether suitable estimations $a_{k,\ell}(N)$ might produce fruitful factorization algorithms in terms of computational complexity. To this end,
pick a positive real number $\varepsilon > 0$, and suppose by way of hypothesis we have an algorithm running in $O\lp \log^M(N) \rp$ operations which produces a value $a_{k,\ell}(N)$ such that, if $x_N, y_N$ are the minimal integers such that
\begin{align*}
    \dfrac{3}{4 x_N^k} < \dfrac{c_{k,\ell}(N)}{N^{k+\ell}} \ \ \ \ \ \text{and} \ \ \ \ \ \dfrac{3}{4y_N^k} < \dfrac{a_{k,\ell}(N)}{N^{k+\ell}},
\end{align*}
respectively, that $(1-\varepsilon) y_N < x_N < (1+\varepsilon) y_N$ for $N \gg_\varepsilon 0$. Then by Theorem \ref{Thm: Quick Factorization Range}, noting that $\omega(N) \leq 1/\delta$ for $N \gg 0$ because $p \asymp N^\delta$, we know that $N$ has its minimal prime divisors in the range
\begin{align*}
    \left[ (1-\varepsilon) y_N, \lp \dfrac{8}{3\delta} \rp^{1/k} (1+\varepsilon) y_N \right].
\end{align*}
By the prime number theorem, this interval contains
\begin{align*}
    \dfrac{(8/3\delta)^{1/k} (1+\varepsilon) y_N}{\log\lp (8/3\delta)^{1/k} (1+\varepsilon) y_N \rp} - \dfrac{(1 - \varepsilon) y_N}{\log\lp (1 - \varepsilon) y_N \rp} \sim \dfrac{y_N}{\log(y_N)} \lp\dfrac{8}{3\delta}\rp^{1/k}
\end{align*}
primes, for $N \gg_\varepsilon 0$ asymptotically. Now, by construction, we know by \eqref{Eqn: Wide interval eqn} that
\begin{align*}
    \dfrac{3}{4p^k} \leq \dfrac{3}{4 x_N^k} < \dfrac{c_{k,\ell}(N)}{N^{k+\ell}} < \dfrac{2/\delta}{p^k}.
\end{align*}
Thus, we must have $x_N^{-k} \asymp \delta^{-1} p^{-k}$, thus $x_N \sim y_N \asymp \delta^{-1/k} p \asymp \delta^{-1/k} N^\delta$. Therefore, $N$ must have its minimal prime divisor in the aforementioned interval, which contains 
\begin{align*}
    O\lp \dfrac{\delta^{-1/k} N^\delta}{\log(\delta^{-1/k} N^\delta)} \lp \dfrac{8}{3\delta} \rp^{1/k} \rp
\end{align*}
primes. Using a segmented Sieve of Eratosthenes, it will take $O\lp \delta^{-1/k} N^\delta \log\log\lp \delta^{-1/k} N^\delta \rp \rp$ operations to locate said primes. Therefore, when it is known that the minimal prime $p$ dividing $N$ has $p \asymp N^\delta$, we can determine this prime in
\begin{align*}
    O\lp \delta^{-1/k} N^\delta \log\log\lp \delta^{-1/k} N^\delta \rp \rp
\end{align*}
arithmetic operations. If a suitable approximation $a_{k,\ell}(N)$ to $c_{k,\ell}(N)$ can be computed sufficiently quickly, then the aforementioned procedure describes an algorithm for taking an integer $N$, about whose smallest prime factor we know nothing {\it a priori}, calculating quickly a value $\delta$ such that $N$ can be factorized deterministically in $O\lp N^\delta \log\log(N) \rp$ arithmetic steps, which requires $O\lp N^\delta \log^2(N) \rp$ bit operations\footnote{More pedantic improved bounds are of course possible using more iterated logarithms; such technicalities are not here our focus since this algorithm rests on the unknown question of approximating $c_{k,\ell}(N)$ quickly.}. Of particular note is that the best known deterministic factorization algorithms have runtime $O\lp N^{1/5 + o(1)} \rp$ \cite{Harvey,HarveyHitter}, and therefore if the coefficient $c_{k,\ell}(N)$ can be either calculated or suitably approximated, e.g., in $O\lp N^{1/5 - \varepsilon} \rp$ arithmetic operations, then the remaining discussion of this section provides a deterministic factorization algorithm  with worst-case runtime equal to that of \cite{HarveyHitter} but which improves their runtime whenever the minimal prime dividing $N$ satisfies $p = O\lp N^{1/5 - \varepsilon} \rp$. Such an algorithm would follow the following procedure:
\begin{enumerate}
    \item Calculate the value $$\delta := \dfrac{1}{k \log(N)} \log\lp \dfrac{3 N^{k+\ell}}{4 a_{k,\ell}(N)} \rp.$$
    \item If $\delta \geq \frac 15$, use the algorithm of \cite{HarveyHitter}.
    \item If $\delta < \frac 15$, find $p$ in the interval guaranteed by Theorem \ref{Thm: Quick Factorization Range} using a segmented Sieve of Eratosthenes and trial division.
\end{enumerate}
In light of this new discovery, we pose the following open question.

\begin{question}
    Is there a polynomial-time algorithm to compute an approximate value for $c_{k,\ell}(N)$ which is sufficiently accurate as $N \to \infty$ to implement the algorithm described above efficiently?
\end{question}

It is also interesting to question where the smallest prime $p$ dividing $N$ is likely to reside in the interval guaranteed by Theorem \ref{Thm: Quick Factorization Range}. Any predictable behavior in this realm, which is quite plausibly exhibited in the kinds of semiprimes used in the RSA algorithm, would then either yield an improved deterministic algorithm for integer factorization (again dependent on rapid asymptotic estimation of $c_{k,\ell}(N)$) or even improved probabilistic algorithms. If such an interval of confidence were sufficiently narrow, it may be the case that polynomial-time deterministic factorization is equivalent to polynomial-time deterministic approximation to $c_{k,\ell}(N)$. Whether or not an approximation algorithm to these coefficients exists which even improves upon currently known results, or provides a faster probabilistic factorization algorithm that improves on known methods, would be an interesting question for future work.

\section{Factorization Example} \label{Sec: Appendix B}

Consider $n=53,229,037$. The classical rough estimate for the smallest prime factor $p$ of $n$ is $2\leq p\leq \sqrt{n} \approx 7295.8.$ To make use of Theorem \ref{Thm: Quick Factorization Range}, we first check primes up to $\ln(n) \approx 17.79$ and verify that no such prime divides $n$. We then compute\footnote{For the sake of the example, we assume we know how to compute $c_{k,\ell}(n).$}, for example,
\begin{equation*}
    c_{3,7}(n) = 967357118954379126066831129989639443688588759415375453525761356800 \approx 9.67 \times 10^{65}.
\end{equation*}

\noindent By Theorem \ref{Thm: Quick Factorization Range}, the lower bound for the smallest prime dividing $n$ is $\lp 4c_{3,7}(n)/n^{10}\rp^{-\frac{1}{3}} \approx 5211.79$, and the upper bound is $\lp\lp 4c_{3,7}(n)/n^{10}\rp^{-\frac{1}{3}}+1\rp\lp3\omega(n)/8\rp^{\frac{1}{3}}\approx 9107.58$, assuming $n$ has 2 distinct prime factors, i.e., $5212 \leq p \leq 9107.$

The upper bound does not improve on the square root bound, but we can sharpen the bounds by considering larger values of $k$ and $\ell$. Consider $k=21$ and $\ell=23$ where again $n=53,229,037$. In this case, $c_{21,23}(n) \approx 3.696\times 10^{259}.$ Now, we deduce $6635\leq p \leq 7185$, which is sharper both in its upper and lower estimates and improves on the $\sqrt{n}$ bound of $7295.$ There are $42$ primes in this range and it can be checked quickly that $p=6737,$ i.e., $53,229,037=6737 \times 7901.$ Higher values of $k$ give increased opportunity to improve the search range at the cost of requiring estimates for very large values of $c_{k,\ell}(n)$. In such practical terms, as mentioned in Appendix A, it is natural to inquire whether estimates like $c_{21,23}(n) \approx 3.696\times 10^{259}$ can be derived rapidly by enumerating a relatively small number of partitions and using MacMahon-type formulas for estimation.

\section*{AI and computational resource disclosure}

The main results, their formulation, and the underlying ideas of proof are the authors' own. The second author used Gemini 3.6 Flash throughout as a research assistant, including as a tool for verifying certain numerical computations and as a dialogue partner to stress test arguments. All statements and proofs were verified by the authors, who take sole responsibility for them.  


\end{document}